\newtheorem{theorem}{Theorem}
\numberwithin{theorem}{section}
\newtheorem{lemma}[theorem]{Lemma}
\newtheorem{corollary}[theorem]{Corollary}
\newtheorem{proposition}[theorem]{Proposition}
\theoremstyle{definition}
\theoremstyle{definition}
\newtheorem{example}[theorem]{Example}
\newtheorem{remark}[theorem]{Remark}
\newtheorem*{problem*}{Problem}
\theoremstyle{plain}
\newtheorem*{theorema*}{Theorem A}
\newtheorem*{theoremb*}{Theorem B}
\def\enquote#1{``#1''}
\renewcommand{\mathcal}{\EuScript}
\newcommand{\N}{\mathbb{N}} 
\newcommand{\Z}{\mathbb{Z}}
\newcommand{\R}{\mathbb{R}}
\newcommand{\C}{\mathbb{C}}
\newcommand{\T}{\mathbb{T}}
\newcommand{\LLL}{\mathscr{L}}
\newcommand{\sgS}{\mathcal{S}}
\newcommand{\sgT}{\mathcal{T}}
\newcommand{\Id}{I}
\newcommand{\spec}{\sigma}
\newcommand{\fix}{\operatorname{fix}}
\newcommand{\lin}{\operatorname{lin}}
\newcommand{\rg}{\operatorname{rg}}
\newcommand{\defeq}{\mathrel{\vcentcolon=}}
\newcommand{\ce}{\mathrm{c}}
\newcommand{\co}{\ce_0}
\newcommand{\one}{\mathbbm{1}}
\newenvironment{abc}{\begin{enumerate}[{\rm(a)}]}{\end{enumerate}}
\newenvironment{iiv}{\begin{enumerate}[{\rm(i)}]}{\end{enumerate}}
\newenvironment{num}{\begin{enumerate}[{\rm1.}]}{\end{enumerate}}
\def\defi#1{\emph{#1}}
\begin{document}
	\title[Relative compactness of orbits and geometry of Banach spaces]{Relative compactness of orbits and geometry of Banach spaces}
	\author{B\'alint Farkas}
	\author{Henrik Kreidler}
	\address{University of Wuppertal, School of Mathematics and Natural Sciences, Gau\ss stra\ss e 20, 42119 Wuppertal, Germany}
	\email{farkas@uni-wuppertal.de}
	\email{kreidler@uni-wuppertal.de}
	
\begin{abstract}
We investigate for a bounded semigroup of linear operators $\sgS$ on a Banach space $E$ and a vector $x \in E$, when relative compactness of $\sgS(\Id-T)x$ for every $T \in \sgS$ implies relative compactness of the orbit $\sgS x$. In particular, we derive characterizations of separable Banach spaces not containing $\co$ and of reflexivity of Banach spaces with a Schauder basis in terms of such compactness results.
\end{abstract}
\keywords{power-bounded operators, geometry of Banach spaces, reflexivity, (asymptotically) almost periodic vectors and differences}
\subjclass[2020]{43A60, 46B20, 47A35}
\maketitle

\section{Introduction}
A famous result due to V.~P.~Fonf, M.~Lin and P.~Wojtaszczyk (see \cite{FLW01}) states that a Banach space $E$ with a Schauder basis is reflexive if and only if every power-bounded linear operator $T \in \LLL(E)$ on $E$ is mean ergodic. Analogous results have been proved  by A.{} A.{} Albanese, J.{} Bonet and W.{}J.{} Ricker for operators  \cite{ABRop} and for one-parameter semigroups \cite{ABRsgrp} on  Fr\'echet spaces, a precursor of the latter characterization in the case of one-parameter semigroups on Banach spaces being due to D.{} Mugnolo \cite{Mugnolo}. We also refer to \cite{FLRUnif} for connections between uniform ergodicity and geometric properties of Banach spaces.

\medskip\noindent  One of the main goals of this article is to prove the following, different, operator theoretic characterization of reflexivity of Banach spaces with a Schauder basis.
\begin{theorema*}
	For a Banach space $E$ with a Schauder basis the following assertions are equivalent:
		\begin{abc}
			\item $E$ is reflexive.
			\item Every power-bounded operator $T\in\LLL(E)$ satisfying that the set of consecutive differences of the iterates
				\begin{align*}
					\{T^{n+1}x - T^{n}x\mid n \in \N\}
				\end{align*}
			is relatively compact for every $x \in E$, is \defi{almost periodic}, i.e., has relatively compact orbits 
			\begin{align*}
				\{T^{n}x\mid n \in \N\}\quad (x\in E).
			\end{align*}
		\end{abc}
\end{theorema*}
We also obtain a characterization of the larger class of separable Banach spaces $E$ not containing $\co$ if we consider \defi{doubly power-bounded} operators $T$  only (i.e., $T$ is invertible and both $T$ and $T^{-1}$ are power-bounded).
\begin{theoremb*}
	For a separable Banach space $E$ the following assertions are equivalent:
		\begin{abc}
			\item $E$ contains no isomorphic copy of $\co$.
			\item Every doubly power-bounded operator $T$ satisfying that the set of consecutive differences of the iterates
				\begin{align*}
					\{T^{n+1}x - T^{n}x\mid n \in \N\}
				\end{align*}
			is relatively compact for every $x \in E$, is almost periodic.			
		\end{abc}
\end{theoremb*}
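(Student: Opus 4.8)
The plan is to prove both implications by relying on the general compactness criterion investigated above: for a bounded semigroup $\sgS$ on $E$ and a vector $x \in E$ with $\sgS(\Id - S)x$ relatively compact for every $S \in \sgS$, the orbit $\sgS x$ is relatively compact provided $E$ contains no isomorphic copy of $\co$. Direction (a)$\Rightarrow$(b) will be a reduction to this criterion, while (b)$\Rightarrow$(a) will be proved by contraposition through an explicit construction, which is where separability enters.

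For (a)$\Rightarrow$(b), let $T$ be doubly power-bounded with $\{T^{n+1}x - T^{n}x \mid n \in \N\}$ relatively compact for every $x$, and take the bounded semigroup $\sgS = \{T^{n} \mid n \in \N\}$. I first verify the hypothesis of the criterion, namely that $\sgS(\Id - S)x$ is relatively compact for all $S \in \sgS$ and all $x$. For $S = T^{m}$ the factorization $\Id - T^{m} = (\Id - T)\sum_{j=0}^{m-1}T^{j}$ gives
\[
	\sgS(\Id - T^{m})x = \sgS(\Id - T)y, \qquad y = \textstyle\sum_{j=0}^{m-1}T^{j}x,
\]
and the right-hand side is relatively compact because it is exactly the set of consecutive differences of the iterates of $y$, which is relatively compact by assumption. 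Since $E$ contains no copy of $\co$, the criterion yields that $\sgS x = \{T^{n}x \mid n \in \N\}$ is relatively compact, i.e.\ $T$ is almost periodic. (Only forward power-boundedness is used here; double power-boundedness is imposed in (b) to make the converse construction possible.) For completeness I would also recall the mechanism behind the criterion: the telescoping identity $x - T^{n}x = \sum_{j=0}^{n-1}T^{j}(\Id - T)x$ exhibits the orbit through the partial sums of a series whose terms $T^{j}(\Id - T)x$ form a relatively compact set; boundedness of $\sgS$ makes this series weakly unconditionally Cauchy, so by the Bessaga--Pe\l czy\'nski theorem the absence of $\co$ forces unconditional, hence relatively compact, behaviour of the partial sums.

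For (b)$\Rightarrow$(a) I argue contrapositively: assuming $E$ contains an isomorphic copy of $\co$, I construct a doubly power-bounded $T$ satisfying the difference hypothesis but which is not almost periodic. The model lives on the space $\ce$ of convergent scalar sequences (which contains $\co$ and is separable): fix unimodular $\lambda_{k} = e^{i\theta_{k}}$ with $\theta_{k} \to 0$ but $\theta_{k} \neq 0$, and let $T_{0}$ be the diagonal isometry $x = (x_{k}) \mapsto (\lambda_{k}x_{k})$ on $\ce$. Then $T_{0}$ is doubly power-bounded; for every $x \in \ce$ the vector $(\Id - T_{0})x$ lies in $\co$ and $\sup_{n}\sup_{k \ge m}|\lambda_{k}^{n}(\lambda_{k} - 1)x_{k}| = \sup_{k \ge m}|(\lambda_{k} - 1)x_{k}| \to 0$, so the consecutive differences are relatively compact; yet the orbit of $\one = (1,1,1,\dots)$ is not relatively compact, since choosing $n_{k}$ with $\lambda_{k}^{n_{k}} \approx -1$ separates the points $T_{0}^{n_{k}}\one$ by a fixed distance in the manner of the $\co$-basis.

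It remains to transplant this model into an arbitrary separable $E \supseteq \co$. Since $\ce \cong \co$, the copy of $\co$ provides a subspace $G \subseteq E$ isomorphic to $\ce$; by Sobczyk's theorem $G$ is complemented, say $E = G \oplus H$. Setting $T = T_{G} \oplus \Id_{H}$, where $T_{G}$ is $T_{0}$ transported to $G$, gives a doubly power-bounded operator whose consecutive differences are relatively compact for every vector (the $H$-component contributes nothing) but whose orbit through $\one_{G} \oplus 0$ is not relatively compact; thus (b) fails. The main obstacle is precisely this converse construction: producing a non-relatively-compact orbit with relatively compact differences is impossible inside $\co$ itself (block-diagonal or isometric operators there always have relatively compact orbits of $\co$-vectors, because finite-dimensional coordinate orbits are compact and the tails die), so one is forced to pass to the larger space $\ce$ and to invoke complementation. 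This is exactly where the separability hypothesis is indispensable, via Sobczyk's theorem.
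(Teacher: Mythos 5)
Your overall architecture coincides with the paper's: direction (a) $\Rightarrow$ (b) reduces, via the factorization $\Id - T^m = \bigl(\sum_{j=0}^{m-1}T^j\bigr)(\Id - T)$ (the cyclic case of Lemma \ref{lem:A1Am}), to a semigroup compactness criterion, and direction (b) $\Rightarrow$ (a) uses Sobczyk's theorem together with the diagonal isometry of Example \ref{exa:1} extended by the identity on a complement --- exactly the paper's route through Theorem \ref{thm:char1}. The genuine gap is in (a) $\Rightarrow$ (b): the \enquote{general compactness criterion} as you state it --- bounded semigroup, all difference sets $\sgS(\Id - S)x$ relatively compact, no copy of $\co$, hence $\sgS x$ relatively compact --- is false, and your parenthetical claim that \enquote{only forward power-boundedness is used here} is provably wrong. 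The paper's Theorem \ref{thm:mainthm} carries the additional hypothesis that $(\Id - P)(\Id - \sgS)x$ be relatively compact, and this hypothesis can be dropped only when the semigroup is bounded below (Corollary \ref{cor:cor1}). Double power-boundedness is precisely what makes the cyclic semigroup bounded below ($\|T^n x\| \geq \|x\|/\sup_m \|T^{-m}\|$), forcing $E_{\mathrm{aws}} = \ker(P) = \{0\}$ so that condition (ii) becomes vacuous; it is load-bearing in this direction, not merely \enquote{imposed to make the converse construction possible}. Concretely: $\ell^1$ is separable, has a Schauder basis, is nonreflexive and contains no copy of $\co$, so combining \cite[Theorem 1]{FLW01} with the Katznelson--Tzafriri theorem as in the proof of \enquote{(g) $\Rightarrow$ (a)} of Theorem \ref{thm:char3} produces a power-bounded $S \in \LLL(\ell^1)$ with $\|S^n(\Id - S)\| \to 0$ --- so all difference sets are relatively compact --- which is not mean ergodic and a fortiori not almost periodic. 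Your unqualified criterion would declare this $S$ almost periodic.

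Relatedly, your sketched \enquote{mechanism} does not prove the criterion even in the doubly power-bounded case: the telescoping identity controls only the \emph{ordered} partial sums $x - T^n x$, whereas Theorem \ref{thm:bespelcz} requires \emph{unconditionally} bounded partial sums, and relative compactness of the set of terms $T^j(\Id - T)x$ gives no bound on subset sums (already $T = -\Id$ shows that bounded ordered partial sums with terms in a compact set need not be weakly unconditionally Cauchy). Manufacturing unconditional boundedness is the actual content of the paper's proof of Theorem \ref{thm:mainthm}: one passes to the Jacobs--de Leeuw--Glicksberg decomposition, uses invertibility \emph{inside the compact group} $\overline{\sgS}$ restricted to $E_{\mathrm{rev}}$ to define $x_m = T_m^{-1}P(T_m x - S_m x)$, and selects the operators inductively via Lemmas \ref{lem:firstlem} and \ref{lem:secondlem} so that every subset sum telescopes up to summable errors. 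If you instead quote Corollary \ref{cor:cor1} as a black box after verifying the bounded-below hypothesis, your (a) $\Rightarrow$ (b) is repaired. Direction (b) $\Rightarrow$ (a) is essentially correct and matches the paper, with two blemishes: the separation of the points $T_0^{n_k}\one$ needs an inductive choice of $n_k$ (so that $\lambda_k^{n_k} \approx -1$ while $\lambda_k^{n_j} \approx 1$ for $j < k$), or more simply the paper's observation that $\fix(T_0) = \{0\}$ while the limit functional lies in $\fix(T_0')$, so $T_0$ is not mean ergodic and hence not almost periodic; and your closing remark that the construction is \enquote{impossible inside $\co$ itself} is mistaken, since $\ce \cong \co$ means Example \ref{exa:1} already lives on $\co$ up to isomorphism --- Sobczyk (and hence separability) is needed only to complement a copy of $\co$ sitting as a proper subspace of $E$, as you correctly say at the end.
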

The last characterization is motivated by a result of R.~B.~Bazit for functions $f \colon G \to E$ defined on a group $G$ and taking values in a Banach space $E$. He proved in \cite{Bolis71} that if $F \colon  G \to E$ is a bounded function such that
	\begin{num}
		\item all difference functions $F_t$ for $t \in G$ defined by
	\begin{align*}
		F_t(s)\defeq F(st) - F(s) \textrm{ for every } s \in G
	\end{align*}	 
			are almost periodic, and
		\item $E$ does not contain an isomorphic copy of $\co$,
	\end{num}
then $F$ is almost periodic. Here, a function $g\colon G\to E$ is called \defi{almost periodic} if $\{g(t \:\cdot\:)\mid t\in G\}$ is relatively compact in the Banach space of bounded, $E$-valued functions on $G$. This result---which is closely related to the classical Bohl--Bohr theorem and its generalization by M.~I.~Kadets to vector-valued functions in \cite{KadecMI}---hints at a connection between geometric properties of the Banach space and criteria for almost periodicity of operators. 

Given a doubly power-bounded operator $T \in \LLL(E)$  and $x \in E$ we can apply Bazit's result to the
function $f\colon  \Z \to E,\, k \mapsto T^kx$ and use Lemma \ref{lem:A1Am} from below to deduce the relative compactness of
	\begin{align*}
		\{T^{n}x\mid n \in \N\}
	\end{align*}
from the relative compactness of
	\begin{align*}
		\{T^{n+1}x - T^{n}x\mid n \in \N\}
	\end{align*}
if $E$ does not contain a copy of $\co$. 

However, we are primarily interested in the case of arbitrary power-bounded operators and, more generally, of bounded, abelian operator semigroups (in particular, bounded one-parameter operator semigroups).  More precisely, we discuss the following question.
\begin{problem*}\label{prob:main}
	Given a complex Banach space $E$, a bounded, abelian semigroup $\sgS \subseteq \LLL(E)$ of operators on $E$ and a vector $x \in E$, under what condition can we conclude relative compactness of the orbit
	\begin{align*}
		\sgS x = \{Sx\mid  S \in \sgS\}\subseteq E
	\end{align*}
from the relative compactness of the differences of the orbit points
	\begin{align*}
		\sgS(\Id -T)x &= \{Sx - STx\mid S \in \sgS\} 
	\end{align*}
for every $T \in \sgS$?
\end{problem*}

We focus on three aspects of this problem. First, we consider the case of mean ergodic semigroups in Section 2. We then look at semigroups on Banach spaces not containing $\co$ in Section 3. Finally, we  conclude with the stated characterization of reflexivity for Banach spaces admitting a Schauder basis in Section 4.

Before we start, we make the following simple but nevertheless useful observation, which makes things easier when dealing with bounded, finitely generated semigroups (in particular, cyclic semigroups generated by a single power-bounded operator).
\begin{lemma}\label{lem:A1Am}
Let $E$ be a Banach space and let $\sgS$ be the operator semigroup generated by the commuting, power-bounded operators $T_1,\dotsc,T_m\in\LLL(E)$. For $x \in E$ the following assertions are equivalent:
	\begin{abc}
		\item $\sgS(\Id-T_j)x$ is relatively compact for every $j \in \{1,\dotsc,m\}$.
		\item $\sgS(\Id-T)x$ is relatively compact for every $T \in \sgS$.
	\end{abc}
\end{lemma}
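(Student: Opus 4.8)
The plan is to establish the two implications separately, with essentially all of the content residing in the direction (a)$\Rightarrow$(b); the converse is immediate. Indeed, since each generator $T_j$ belongs to the semigroup $\sgS$ it generates, taking $T = T_j$ in (b) instantly yields the relative compactness of $\sgS(\Id-T_j)x$, which is (a).

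For (a)$\Rightarrow$(b) I would fix $T \in \sgS$ and, using that the commuting generators give $\sgS = \{T_1^{n_1}\dotsb T_m^{n_m}\mid n_1,\dotsc,n_m\in\N\}$, write $T = T_1^{l_1}\dotsb T_m^{l_m}$ for suitable exponents $l_1,\dotsc,l_m$. The algebraic heart of the argument is the telescoping identity
\[
	\Id - T_1^{l_1}\dotsb T_m^{l_m} = \sum_{j=1}^m T_1^{l_1}\dotsb T_{j-1}^{l_{j-1}}\bigl(\Id - T_j^{l_j}\bigr),
\]
combined with the single-operator factorization $\Id - T_j^{l_j} = \bigl(\sum_{k=0}^{l_j-1} T_j^k\bigr)(\Id - T_j)$. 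Substituting the latter into the former and applying an arbitrary $S \in \sgS$ gives
\[
	S(\Id - T)x = \sum_{j=1}^m \sum_{k=0}^{l_j-1} S\, T_1^{l_1}\dotsb T_{j-1}^{l_{j-1}} T_j^k (\Id - T_j)x.
\]
Each summand has the form $\tilde S(\Id - T_j)x$ with $\tilde S \defeq S\, T_1^{l_1}\dotsb T_{j-1}^{l_{j-1}} T_j^k \in \sgS$, hence lies in the compact set $A_j \defeq \overline{\sgS(\Id - T_j)x}$ provided by hypothesis (a).

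The decisive observation is that, while $S$ ranges over all of $\sgS$, the total number of summands $\sum_{j=1}^m l_j$ is a fixed finite number determined solely by $T$. Consequently every element of $\sgS(\Id-T)x$ belongs to the finite sumset
\[
	\sum_{j=1}^m \bigl(\underbrace{A_j + \dotsb + A_j}_{l_j \text{ terms}}\bigr),
\]
which is compact, being the image of a finite product of compact sets under the continuous vector addition. Therefore $\sgS(\Id - T)x$ is contained in a compact set and is relatively compact, as required.

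I expect the only subtle point to be one of bookkeeping rather than analysis: one must resist factoring $\Id - T$ in a way whose number of terms grows together with the variable operator $S$. The correct organization keeps $T$—and hence the exponents $l_j$ and the number of summands—fixed, letting the auxiliary operator $\tilde S$ absorb $S$ entirely, so that the conclusion rests only on the elementary stability of relative compactness under sums of a \emph{fixed} finite number of relatively compact sets.
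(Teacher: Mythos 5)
Your proof is correct and takes essentially the same route as the paper's: the identical telescoping decomposition $\Id - \prod_{j} T_j^{l_j} = \sum_{j} \prod_{i<j} T_i^{l_i}\,(\Id - T_j^{l_j})$ combined with the geometric-series factorization of $\Id - T_j^{l_j}$, concluding because $\sgS(\Id-T)x$ sits inside a sum of a \emph{fixed} finite number of relatively compact sets. Your bookkeeping is in fact slightly cleaner, since your inner index range $k = 0,\dotsc,l_j-1$ corrects a harmless off-by-one in the paper's displayed formula.
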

 \begin{proof}
Suppose that (a) holds and take an element $T \in \sgS$. We then find $k_1,\dotsc,k_m \in \N$ such that
 	\begin{align*}
 		T = \prod_{j=1}^m T_j^{k_j}.
 	\end{align*}
 But then
 	\begin{align*}
 		\Id - T &= \sum_{j=1}^m \prod_{i=1}^{j-1} T_i^{k_i} (\Id- T_j^{k_j})
 		= \sum_{j=1}^m \prod_{i=1}^{j-1} T_i^{k_i} \sum_{l=1}^{k_j} T_j^l (\Id- T_j)\\
 		&= \sum_{j=1}^m \sum_{l=1}^{k_j} \prod_{i=1}^{j-1} T_i^{k_i} T_j^l (\Id- T_j)
 	\end{align*}
 and therefore $\sgS(\Id-T)x$ is contained in a finite sum of relatively compact sets, and assertion (b) follows. The other implication is trivial.
 \end{proof}

\section{Mean ergodic semigroups}
It is considerably simpler to give answers to the proposed problem if we deal with mean ergodic semigroups. Recall that an abelian operator semigroup $\sgS \subseteq \LLL(E)$ is called \emph{mean ergodic} if one of the following equivalent conditions is satisfied:
\begin{abc}
	\item $\fix(\sgS)$ separates $\fix(\sgS')$.
	\item The closed convex hull of $\sgS$ with respect to the strong operator topology contains a projection $P \in \LLL(E)$ with
		\begin{align*}
			PS = SP = P
		\end{align*}
		for all $S \in \sgS$.
	\item The Banach space $E$ decomposes into the direct sum
		\begin{align*}
			\fix(\sgS) \oplus \overline{\rg(\Id-\sgS)}.
		\end{align*}
\end{abc}
Here,
	\begin{align*}
		\fix(\sgS) &\defeq \{x \in E\mid Sx = x \textrm{ for every } S \in \sgS\}, \\
		\rg(\Id - \sgS) &\defeq \lin \bigcup_{S \in \sgS} \rg(\Id - S),
	\end{align*}
and $\sgS'=\{S'\mid S\in\sgS\} \subseteq \LLL(E')$ denotes the semigroup of the adjoint operators.
Proofs for the above equivalences and further equivalent conditions involving the convergence of ergodic nets can be found in \cite[Corollary 1.8]{Schr2013} (see also \cite{Nage1973}).

The first partial solution to our problem reveals a connection between mean ergodicity and relative compactness of orbits of bounded, abelian operator semigroups.
\begin{theorem}\label{thm:charmeanergodic1}
	For a bounded, abelian operator semigroup $\sgS \subseteq \LLL(E)$ on a Banach space $E$ the following assertions are equivalent:
	\begin{abc}
		\item $\sgS$ has relatively compact orbits on $E$.
		\item The following two conditions are satisfied:
			\begin{iiv}
				\item $\sgS$ is mean ergodic.
				\item $\sgS$ has relatively compact orbits on $\rg(\Id - T)$ for every $T \in \sgS$.
			\end{iiv} 
	\end{abc}
\end{theorem}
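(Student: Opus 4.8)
The plan is to prove the two implications separately, the heart of the matter being a single soft fact: for a bounded family of operators the set of vectors with relatively compact orbit is always a closed linear subspace. I would isolate this as follows. Set $M \defeq \sup_{S \in \sgS}\|S\| < \infty$ and put $A \defeq \{x \in E : \sgS x \text{ is relatively compact}\}$. Linearity of $A$ is immediate, since for scalars $\lambda_i$ one has $\sgS\big(\sum_i \lambda_i x_i\big) \subseteq \sum_i \lambda_i\, \overline{\sgS x_i}$, a finite Minkowski sum of compact sets and hence relatively compact. For closedness, given $x_k \to x$ with $x_k \in A$ and $\varepsilon > 0$, I would pick $k$ with $M\|x - x_k\| < \varepsilon/2$, cover the totally bounded set $\sgS x_k$ by finitely many balls of radius $\varepsilon/2$, and observe that $\|Sx - Sx_k\| \le M\|x - x_k\| < \varepsilon/2$ for all $S \in \sgS$, so the same centers give a finite $\varepsilon$-net for $\sgS x$. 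Thus $\sgS x$ is totally bounded, and since $E$ is complete it is relatively compact, i.e.\ $x \in A$.

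For the implication (a)$\Rightarrow$(b), condition (ii) is trivial, as relative compactness of $\sgS z$ for every $z \in E$ holds in particular for $z \in \rg(\Id-T)$. For (i) I would note that norm-relatively compact orbits are a fortiori weakly relatively compact, and then invoke the classical fact that a bounded abelian operator semigroup with relatively weakly compact orbits is mean ergodic; this is precisely what yields the decomposition in the characterization recalled above (cf.\ \cite{Nage1973}, \cite{Schr2013}). The underlying mechanism, should a self-contained argument be preferred, is to apply the Markov--Kakutani fixed point theorem to the weakly compact, convex, $\sgS$-invariant set $\overline{\mathrm{co}}^{\,w}(\sgS x)$ to obtain a fixed vector $\bar x$, and to check that $x - \bar x \in \overline{\rg(\Id-\sgS)}$ by approximating $\bar x$ weakly by convex combinations $\sum_i \lambda_i S_i x$, for which $x - \sum_i \lambda_i S_i x = \sum_i \lambda_i (\Id - S_i)x \in \rg(\Id-\sgS)$.

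For the implication (b)$\Rightarrow$(a), the key subspace $A$ does all the work. Every $x \in \fix(\sgS)$ has singleton orbit, so $\fix(\sgS) \subseteq A$. For any $T \in \sgS$ and $w \in E$, hypothesis (ii) says that $\sgS(\Id-T)w$ is relatively compact, that is $\rg(\Id-T) \subseteq A$; since $A$ is a linear subspace, $\rg(\Id-\sgS) = \lin\bigcup_{T \in \sgS}\rg(\Id-T) \subseteq A$, and since $A$ is closed, $\overline{\rg(\Id-\sgS)} \subseteq A$. By mean ergodicity (i) and the direct-sum characterization, $E = \fix(\sgS) \oplus \overline{\rg(\Id-\sgS)}$, and both summands lie in the subspace $A$; hence $A = E$, which is exactly assertion (a).

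The genuinely non-formal point I expect to be the main obstacle is the closedness of $A$, i.e.\ the total-boundedness approximation, together with ensuring that mean ergodicity is used in the precise form of the direct-sum decomposition rather than merely as convergence of averages. I would also take care to cite the implication \enquote{relatively weakly compact orbits $\Rightarrow$ mean ergodic} precisely, since it silently underlies (a)$\Rightarrow$(b)(i); once these two ingredients are secured, the remainder is bookkeeping with the subspace $A$.
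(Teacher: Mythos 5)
Your proposal is correct and takes essentially the same route as the paper: \enquote{(a) $\Rightarrow$ (b)} by citing the classical fact that bounded abelian semigroups with relatively (weakly) compact orbits are mean ergodic, and \enquote{(b) $\Rightarrow$ (a)} via the decomposition $E = \fix(\sgS) \oplus \overline{\rg(\Id-\sgS)}$. Your lemma that $A = \{x \in E : \sgS x \text{ relatively compact}\}$ is a closed linear subspace merely makes explicit the soft facts the paper leaves implicit when passing from relatively compact orbits on $\rg(\Id-\sgS)$ to its closure and then to the direct sum.
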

\begin{proof}
	Since abelian semigroups with relatively (weakly) compact orbits are mean ergodic (see, e.g., Corollary 1.9 in \cite{Schr2013}), the implication \enquote{(a) $\Rightarrow$ (b)} is clear. On the other hand, if (b) holds, then $\sgS$ has relatively compact orbits on the space $\rg(\Id - \sgS)$ and then even on its closure. Since $\sgS$ has relatively compact orbits on $\fix(\sgS)$, it then has relatively compact orbits on
		\begin{align*}
			E = \fix(\sgS) \oplus \overline{\rg(\Id- \sgS)}.
		\end{align*}
	Thus the implication  \enquote{(b) $\Rightarrow$ (a)} is proved.
\end{proof}
\begin{corollary}\label{cor:charmeanergodic2}
	Let $\sgS \subseteq \LLL(E)$ be a bounded, mean ergodic, abelian operator semigroup on a Banach space $E$. For $x \in E$ the following assertions are equivalent:
		\begin{abc}
			\item $\sgS x$ is relatively compact.
			\item $\sgS(\Id-T)x$ is relatively compact for every $T \in \sgS$.
		\end{abc}
\end{corollary}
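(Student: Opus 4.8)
The plan is to prove the non-trivial implication \enquote{(b) $\Rightarrow$ (a)} by reducing the orbit $\sgS x$ to the orbit of $y \defeq x - Px$, where $P$ is the mean ergodic projection afforded by mean ergodicity, and then to approximate $y$ by finite linear combinations of the difference vectors $(\Id - T)x$, to which the hypothesis applies directly. The reverse implication \enquote{(a) $\Rightarrow$ (b)} is immediate: since $\sgS$ is abelian we have $\sgS(\Id - T)x = (\Id - T)\sgS x$, the continuous image of the relatively compact set $\sgS x$ under $\Id - T$, hence relatively compact.

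For \enquote{(b) $\Rightarrow$ (a)} I would first invoke the mean ergodic projection $P \in \overline{\operatorname{co}}^{\mathrm{SOT}}(\sgS)$ with $PS = SP = P$, so that $Px \in \fix(\sgS)$ and $x = Px + y$ with $y = x - Px \in \overline{\rg(\Id - \sgS)}$. Since $Px$ is fixed, $Sx = Px + Sy$ for every $S \in \sgS$, so that $\sgS x = Px + \sgS y$ is merely a translate of $\sgS y$; it therefore suffices to prove that $\sgS y$ is relatively compact.

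The key step is to locate $y$ in the closed linear span $V \defeq \overline{\lin\{(\Id - T)x \mid T \in \sgS\}}$. Because $P$ lies in the strong closure of the convex hull of $\sgS$, there is a net of convex combinations $A_\alpha = \sum_i c_i^\alpha S_i^\alpha \in \operatorname{co}(\sgS)$ with $A_\alpha x \to Px$. Since $x - A_\alpha x = \sum_i c_i^\alpha (\Id - S_i^\alpha)x$ lies in $\lin\{(\Id - T)x \mid T \in \sgS\}$ and converges to $x - Px = y$, we obtain $y \in V$. Moreover, for any finite linear combination $w = \sum_k \lambda_k (\Id - T_k)x$ the inclusion $\sgS w \subseteq \sum_k \lambda_k \sgS(\Id - T_k)x$ exhibits $\sgS w$ as contained in a finite sum of scalar multiples of relatively compact sets; hence $\sgS w$ is relatively compact for every $w$ in this span.

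Finally, a standard total-boundedness argument promotes relative compactness from the dense subspace to $y$ itself: with $M \defeq \sup_{S \in \sgS}\|S\| < \infty$, given $\varepsilon > 0$ I choose $w$ in the span with $M\|y - w\| < \varepsilon/2$, so that $\sup_{S \in \sgS}\|Sy - Sw\| \le M\|y - w\| < \varepsilon/2$; combining a finite $\varepsilon/2$-net for the relatively compact set $\sgS w$ with this uniform estimate yields a finite $\varepsilon$-net for $\sgS y$. Thus $\sgS y$ is totally bounded, hence relatively compact by completeness of $E$, and therefore so is $\sgS x = Px + \sgS y$. The main obstacle is precisely the identification $y \in V$: this is where genuine use of mean ergodicity enters, ensuring that the approximants of $y$ are assembled from the very differences $(\Id - T)x$ controlled by the hypothesis, rather than from arbitrary elements of $\rg(\Id - \sgS)$, about whose orbits we have no information.
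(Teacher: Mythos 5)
Your proof is correct, and it takes a more direct route than the paper while running on the same underlying engine, namely the mean ergodic decomposition $E = \fix(\sgS) \oplus \overline{\rg(\Id - \sgS)}$. The paper's proof is a two-line reduction: replace $E$ by the closed linear hull of the orbit and invoke Theorem \ref{thm:charmeanergodic1} on that subspace. You instead argue entirely at the level of the single vector $x$: you split $x = Px + y$ and then prove the crucial localization $y \in \overline{\lin}\{(\Id - T)x \mid T \in \sgS\}$ by approximating $P$ with a net $A_\alpha \in \operatorname{co}(\sgS)$, so that $x - A_\alpha x$ is a convex combination of the differences $(\Id - S_i^\alpha)x$. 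This step is exactly the vector-level form of what the paper's reduction leaves implicit: to apply Theorem \ref{thm:charmeanergodic1} on the cyclic subspace one must check that mean ergodicity passes to that closed $\sgS$-invariant subspace (which holds because $P$, being an SOT-limit of convex combinations from $\sgS$, leaves any such subspace invariant) and, strictly speaking, that $x$ itself lies in the subspace, for which one should take the closed linear hull of $\{x\} \cup \sgS x$ rather than of $\sgS x$ alone. So your inlined argument buys a self-contained proof that makes these verifications explicit, at the cost of redoing by hand the density/uniform-boundedness approximation that is already contained in the proof of Theorem \ref{thm:charmeanergodic1} (there phrased as passing from $\rg(\Id - \sgS)$ to its closure); the paper's version buys brevity by reusing that theorem. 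All your individual steps check out: \enquote{(a) $\Rightarrow$ (b)} via $\sgS(\Id - T)x = (\Id - T)\sgS x$; relative compactness of $\sgS w$ for $w = \sum_k \lambda_k (\Id - T_k)x$ via the Minkowski-sum inclusion; the $\varepsilon/2$-net argument with $M = \sup_{S \in \sgS}\|S\|$; and $\sgS x = Px + \sgS y$ since $SP = P$.
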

\begin{proof}
	Assume that (b) holds. Replacing $E$ by the closed linear hull of $\sgS x$ we may assume that $\sgS$ has relatively compact orbits on $\rg(\Id - T)$ for every $T \in \sgS$. But then the result directly follows from Theorem \ref{thm:charmeanergodic1}.
\end{proof}
Since bounded, abelian operator semigroups on reflexive Banach spaces are mean ergodic (by \cite[Corollary 1.9]{Schr2013}), we obtain the following consequence of Corollary \ref{cor:charmeanergodic2}.
\begin{corollary}\label{cor:bddsgrprefl}
	For a bounded, abelian semigroup $\sgS \subseteq \LLL(E)$ on a reflexive Banach space $E$ and $x \in E$ the following assertions are equivalent:
	\begin{abc}
		\item $\sgS x$ is relatively compact.
		\item $\sgS(\Id- T)x$ is relatively compact for every $T \in \sgS$. 
	\end{abc}
\end{corollary}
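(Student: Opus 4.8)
The plan is to derive Corollary \ref{cor:bddsgrprefl} as an immediate consequence of Corollary \ref{cor:charmeanergodic2}. The only gap between the two statements is that Corollary \ref{cor:charmeanergodic2} carries the hypothesis that $\sgS$ is mean ergodic, whereas here we assume instead that the underlying Banach space $E$ is reflexive. So the entire task reduces to verifying that, in the reflexive setting, the mean ergodicity hypothesis is automatically satisfied.

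First I would recall the general fact that a bounded, abelian operator semigroup on a reflexive Banach space has relatively \emph{weakly} compact orbits. Indeed, for each $x \in E$ the orbit $\sgS x$ is norm-bounded (by boundedness of $\sgS$), and in a reflexive space every norm-bounded set is relatively weakly compact by the Eberlein--Šmulian theorem (equivalently, by the characterization of reflexivity via weak compactness of the closed unit ball). I would then invoke the cited result \cite[Corollary 1.9]{Schr2013}, which states that abelian semigroups with relatively weakly compact orbits are mean ergodic. Combining these two observations shows that $\sgS$ is mean ergodic on $E$.

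With mean ergodicity established, I would simply apply Corollary \ref{cor:charmeanergodic2} directly: under the now-verified hypotheses, assertions (a) and (b) of that corollary coincide verbatim with assertions (a) and (b) here, and the equivalence transfers without further work.

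I do not expect any genuine obstacle in this argument, since it is a direct specialization. The one point requiring mild care is to ensure that the semigroup remains mean ergodic after the reduction step performed inside the proof of Corollary \ref{cor:charmeanergodic2}, where one passes to the closed linear hull of $\sgS x$; but this is harmless, because a closed subspace of a reflexive space is again reflexive, so the restricted semigroup on that subspace still enjoys the reflexivity-driven mean ergodicity. Hence the proof is genuinely a one-line deduction, and the substance lies entirely in correctly citing the reflexivity$\,\Rightarrow\,$mean ergodicity implication.
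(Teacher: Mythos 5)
Your proof is correct and takes essentially the same route as the paper: the paper also derives this corollary directly from Corollary \ref{cor:charmeanergodic2}, noting that bounded, abelian operator semigroups on reflexive Banach spaces are automatically mean ergodic (citing \cite[Corollary 1.9]{Schr2013}, which applies because bounded orbits are relatively weakly compact in a reflexive space). Your additional remark that the closed linear hull of $\sgS x$ is again reflexive, so mean ergodicity survives the reduction step inside Corollary \ref{cor:charmeanergodic2}, is a correct and harmless extra precaution.
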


\begin{remark}\label{spectralcrit}
Let $T\in \LLL(E)$ be a power-bounded operator on a reflexive Banach space $E$. 
	\begin{abc}
		\item By Lemma \ref{lem:A1Am} and Corollary \ref{cor:bddsgrprefl} the set $\{T^n\mid n\in \N\}\subseteq E$ is relatively compact with respect to the strong operator topology if and only if $\{T^n(\Id-T)\mid n\in \N\}\subseteq E$ is relatively compact with respect to the strong operator topology.
		\item We obtain the following interesting spectral condition for relatively compact orbits: If $T$ has peripheral spectrum $\spec(T)\cap\T\subseteq\{1\}$, then $T$ has relatively compact orbits. Indeed, this follows from (a) and the Katznelson--Tzafriri theorem, the latter stating that under this spectral condition one has $\|T^n(\Id-T)\|\to 0$ as $n\to \infty$, see \cite{KaTz86}. In fact, by the mean-ergodic decomposition, in this case we even have that $(T^n)$ converges in the strong operator topology to the projection onto $\fix(T)$.
		\item Also note the following related observation brought to our attention by Jochen Glück: Suppose that $\spec(T)\cap\T$ is countable. Consider the Jacobs--de Leeuw--Glicks\-berg decomposition  $E=E_{\mathrm{aws}}+E_{\mathrm{rev}}$ into $T$-invariant closed, subspaces, where $T$ has relatively compact orbits on $E_{\mathrm{rev}}$ and $T|_{E_{\mathrm{aws}}}$ has no unimodular eigenvalues, see \cite[Thm.{} 9]{JonesLinGdL} for a short proof. By the Arendt--Batty theorem, see \cite[Thm.{} 5.1, Rem.{} 5.2]{AB}, we conclude that $(T^n)$ converges strongly to $0$ on $E_{\mathrm{aws}}$ altogether yielding that $T$ has relatively compact orbits on the entire space $E$. This spectral criterion for almost periodicity has also been noted independently by Lin, see \cite[Corollary 2.7]{Lin2020}. The analogous statement for one-parameter semigroups  is due to Lyubich and Vu \cite{LV}, see also Thm.{} 5.5.6 in \cite{ABHN}.\end{abc}
\end{remark}
\section{Semigroups on Banach spaces not containing $\co$}
As Bazit's result already suggests, a good condition for obtaining the desired equivalence for the relative compactness of orbits is that $E$  does not contain an isomorphic copy of $\co$. In fact, we have  the following example showing that on $\co$ there is an invertible isometry $T$ which is not almost periodic on $\co$, but on $\rg(\Id-T)$. We exhibit this example on the Banach space $\ce$ of convergent sequences, being isomorphic to $\co$.
\begin{example}\label{exa:1}
	Let $(a_n)_{n \in \N}$ be a sequence in $\C$ converging to $1$ such that $|a_n| =1 \neq a_n$ for all $n \in \N$. Consider the operator $T \in \LLL(\ce)$ on the space $\ce$ of convergent complex sequences defined by
		\begin{align*}
			T(x_n)_{n \in \N} \defeq (a_nx_n)_{n \in \N} \textrm{ for } (x_n)_{n \in \N} \in \ce.
		\end{align*}
		Then $T$ is an invertible isometry and $\fix(T) = \{0\}$. The linear functional
			\begin{align*}
				\ce\to\C, \quad (x_n)_{n \in \N} \mapsto \lim_{n \to\infty} x_n
			\end{align*}
		is contained in $\fix(T')$ and therefore $T$ is not mean ergodic. On the other hand, we obtain $\rg(\Id -T^n) \subseteq \co$ for every $n \in \N$ and $T$ restricted to $\co$ has relatively compact orbits.
\end{example} 
To enlighten what may be true for Banach spaces not containing $\co$, let us recall a result of W.{} Ruess and W.{} Summers, who considered the generalization of the Bohl--Bohr--Kadets result for vector-valued functions on $[0,\infty)$, see \cite[Thm.~2.2.2]{RuSu1} or \cite[Thm.~4.3]{RuSu2}.

Given an \defi{asymptotically almost periodic function} $f \colon  [0,\infty) \to E$ to a Banach space $E$, i.e., $\{f(\cdot+t )\mid t\geq 0\}$ is relatively compact in the Banach space of bounded, $E$-valued functions, one can find a unique almost periodic function $f_{\mathrm{r}}\colon  \R \to E$ and a unique continuous function $f_{\mathrm{s}}\colon  [0,\infty) \to E$ vanishing at infinity such that $f = f_{\mathrm{r}}|_{[0,\infty)} + f_{\mathrm{s}} $ (see \cite{Frechet}, \cite{FrechetProof}; and \cite{Fan} for functions defined on $\N$). Then the integral function
	\begin{align*}
		F \colon  [0,\infty) \to\C,\quad t \mapsto \int_0^t f(s)\, \mathrm{d}s
	\end{align*}
	is asymptotically almost periodic if $E$ does not contain $\co$ and $f_s$ is improperly Riemann integrable. As we see, a Jacobs--de Leeuw--Glicksberg type decomposition plays an essential role here. In our setting we use the following version (see \cite[Theorem 4.11]{dLGl1961} and \cite[Section 16.3]{EFHN}).
	\begin{theorem}\label{thm:jdlg}
		Let $E$ be a Banach space and $\sgS \subseteq \LLL(E)$ an abelian operator semigroup with relatively compact orbits and $\overline{\sgS}$ its closure with respect to the strong operator topology. Then there is a unique projection $P \in \overline{\sgS} \subseteq \LLL(E)$ such that 
			\begin{itemize}
				\item $\overline{\sgS}$ restricted to the range $E_{\mathrm{rev}}\defeq \rg(P)$ is a compact topological group, and
				\item $\overline{\sgS}$ restricted to the kernel $E_{\mathrm{aws}}\defeq \ker(P)$ contains the zero operator.
			\end{itemize}
	\end{theorem}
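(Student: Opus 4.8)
The plan is to turn the strongly closed hull $K \defeq \overline{\sgS}$ into a compact abelian semitopological semigroup and then invoke the structure theory of such semigroups, the decisive external input being Ellis's theorem. For the topological setup, relative compactness of the orbits $\sgS x$ forces $\sup_{S \in \sgS}\|S\| < \infty$ by the uniform boundedness principle, and the evaluation embedding $S \mapsto (Sx)_{x \in E}$ identifies $K$, equipped with the strong operator topology, with a closed subset of the compact product $\prod_{x \in E} \overline{\sgS x}$; hence $K$ is compact by Tychonoff, and the pointwise limit of any net in $K$ is again a bounded operator in $\overline{\sgS}$. For each fixed $y \in E$ the map $S \mapsto Sy$ is continuous, so the two multiplications $S \mapsto ST$ and $S \mapsto TS$ are separately continuous, and an iterated-limit argument along nets shows that $K$ is again a semigroup, abelian because $\sgS$ is. Thus $K$ is a compact abelian semitopological semigroup.

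Next I would extract the projection. The family of nonempty closed ideals of $K$ has the finite intersection property, since for ideals $I,J$ one has $\emptyset \neq IJ \subseteq I \cap J$; by compactness $K$ therefore has a minimal closed ideal $M$. A Zorn argument inside $M$ produces a minimal nonempty closed subsemigroup, which the usual stabilizer computation collapses to a single idempotent $P = P^{2}$. Minimality of $M$ gives $xM = M$ for every $x \in M$ (as $xM$ is a closed ideal of $K$ contained in $M$), so $P$ is the identity of $M$, every element of $M$ is invertible, and $M = PK$ is a group. As a bounded idempotent, $P \in K = \overline{\sgS}$ yields the splitting $E = \rg(P) \oplus \ker(P) =: E_{\mathrm{rev}} \oplus E_{\mathrm{aws}}$, and $P$ commutes with every $S \in K$.

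Finally I would read off the assertions. Since $P$ commutes with each $S \in K$, every such $S$ leaves $E_{\mathrm{rev}}$ and $E_{\mathrm{aws}}$ invariant, and on $E_{\mathrm{rev}}$ one has $Sx = SPx = PSx$, so the restriction $K|_{E_{\mathrm{rev}}}$ coincides with $M|_{E_{\mathrm{rev}}}$, the faithful restriction of the group $M$, in which $P$ acts as the identity. Ellis's theorem then upgrades the separately continuous multiplication of this compact group to a jointly continuous one with continuous inversion, so $K|_{E_{\mathrm{rev}}}$ is a compact topological group. On $E_{\mathrm{aws}} = \ker(P)$ the operator $P$ restricts to $0$ and $P \in K$, so the zero operator lies in $K|_{E_{\mathrm{aws}}}$. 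For uniqueness, if $Q \in K$ is another such projection, then $P$ preserves $\rg Q$ (as $P$ and $Q$ commute) and $P|_{\rg Q}$ is an idempotent in the group $K|_{\rg Q}$, hence its identity; thus $Pz = z$ for $z \in \rg Q$, giving $\rg Q \subseteq \rg P$ and, symmetrically, $\rg P = \rg Q$. Since $P,Q$ are then commuting projections onto the same subspace, $PQ = Q$ and $QP = P$, whence $P = Q$.

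The step I expect to be the genuine obstacle is the passage from separate to joint continuity: asserting that $K|_{E_{\mathrm{rev}}}$ is an honest topological group, and not merely a semitopological one, is precisely the content of Ellis's theorem, while obtaining invertibility in $M$ from minimality of the ideal is the other delicate point. The boundedness, the compactness via Tychonoff, and the uniqueness bookkeeping are comparatively routine.
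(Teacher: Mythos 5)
Your proposal is correct and takes essentially the same approach as the source the paper relies on: the paper does not prove Theorem \ref{thm:jdlg} itself but cites \cite[Theorem 4.11]{dLGl1961} and \cite[Section 16.3]{EFHN}, where the argument is precisely yours---compactness of $\overline{\sgS}$ in the strong operator topology via Tychonoff, existence of a minimal closed ideal (Sushkevich kernel) and its idempotent $P$, the identification $M = P\overline{\sgS}$ as a group, and Ellis's theorem to upgrade the compact semitopological group on $\rg(P)$ to a topological group. The details you flag as delicate (separate continuity sufficing for the idempotent construction, invertibility in the minimal ideal, and the Ellis step) are all handled correctly, and your uniqueness argument via the fact that a group contains only one idempotent is sound.
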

	In particular, if $\sgS$ is any bounded, abelian operator semigroup on a Banach space $E$, we obtain a Jacobs--de Leeuw--Glicksberg decomposition of the closed subspace
	\begin{align*}
		E_{\mathrm{aap}}\defeq \{x \in E \mid \sgS x \textrm{ is relatively compact}\}\subseteq E.
	\end{align*}
We denote the corresponding projection in $\LLL(E_{\mathrm{aap}})$ by $P$.
	\begin{theorem}\label{thm:mainthm}
		Let $\sgS \subseteq \LLL(E)$ be a bounded, abelian operator semigroup on a Banach space $E$ not containing an isomorphic copy of $\co$, and let $P\in \LLL(E_{\mathrm{aap}})$ be the corresponding  Jacobs--de Leeuw--Glicksberg projection. For $x \in E$ the following assertions are equivalent:
	\begin{abc}
		\item $\sgS x$ is relatively compact.
		\item The following conditions are satisfied:
		\begin{iiv}
			\item $\sgS(\Id -T)x$ is relatively compact for every $T \in \sgS$.
			\item $(\Id-P)(\Id - \sgS)x$ is relatively compact.
		\end{iiv}
	\end{abc}
\end{theorem}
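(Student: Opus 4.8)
The plan is to prove both implications, with the forward direction being routine and essentially all the content sitting in the converse. For the direction \enquote{(a) $\Rightarrow$ (b)} I would argue directly: if $\sgS x$ is relatively compact then $x \in E_{\mathrm{aap}}$, so (i) holds because $\sgS(\Id-T)x = (\Id-T)\sgS x$ by commutativity is the image of the relatively compact set $\sgS x$ under the bounded operator $\Id-T$, and (ii) holds because $(\Id-P)(\Id-\sgS)x = (\Id-P)(x-\sgS x)$ is the image of the relatively compact set $x - \sgS x$ under $\Id - P$. For \enquote{(b) $\Rightarrow$ (a)} I would first reduce the goal: since $\sgS x$ is relatively compact if and only if $(\Id-\sgS)x = x - \sgS x$ is, it suffices to prove the latter. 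By (i) every difference $(\Id-T)x$ lies in $E_{\mathrm{aap}}$, so I may split it as $(\Id-T)x = P(\Id-T)x + (\Id-P)(\Id-T)x$ into a \emph{reversible} and a \emph{stable} part. Condition (ii) is exactly the relative compactness of the set of stable parts, so the entire problem collapses to showing that the set of reversible parts $R \defeq \{P(\Id-T)x \mid T \in \sgS\} \subseteq E_{\mathrm{rev}}$ is relatively compact.

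To treat $R$, I would write $r_T \defeq P(\Id-T)x$ and record the cocycle identity $r_{ST} = r_S + S r_T$ for $S,T \in \sgS$, which is a one-line telescoping computation using that $P$ commutes with $\sgS$ on $E_{\mathrm{aap}}$. Then I consider the bounded function $F \colon \sgS \to E_{\mathrm{rev}}$, $F(T) \defeq r_T$, whose boundedness is immediate from $\|r_T\| \le \|P\|\,(1+\sup_{S\in\sgS}\|S\|)\,\|x\|$. The cocycle identity shows that each difference function $F_{T_0}(T) = F(TT_0) - F(T) = T r_{T_0}$ is nothing but the orbit map $T \mapsto T r_{T_0}$ of the single vector $r_{T_0} \in E_{\mathrm{rev}} \subseteq E_{\mathrm{aap}}$. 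Since $r_{T_0}$ has relatively compact orbit, this orbit map is an almost periodic $E_{\mathrm{rev}}$-valued function; indeed it extends to the continuous map $g \mapsto g\, r_{T_0}$ on the compact group $\overline{\sgS}|_{E_{\mathrm{rev}}}$ furnished by Theorem~\ref{thm:jdlg}. Hence $F$ is a bounded function all of whose difference functions are almost periodic.

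At this point the geometry of $E$ enters through Bazit's theorem: because $E$, and therefore its subspace $E_{\mathrm{rev}}$, contains no isomorphic copy of $\co$, a bounded function on the abelian semigroup $\sgS$ with almost periodic difference functions is itself almost periodic. Consequently the range $R = F(\sgS)$ is relatively compact. Combined with (ii) this exhibits $(\Id-\sgS)x \subseteq R + \{(\Id-P)(\Id-T)x \mid T \in \sgS\}$ as a subset of a sum of two relatively compact sets, hence relatively compact, so $\sgS x = x - (\Id-\sgS)x$ is relatively compact and (a) follows.

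The crux of the argument is precisely this last application of Bazit's theorem, which is the vector-valued Bohl--Bohr--Kadets mechanism and the only step using the absence of $\co$ (via Bessaga--Pe\l czy\'nski: the weakly unconditionally Cauchy Fourier series of the reversible part converges unconditionally exactly because $\co$ does not embed). I expect two technical points to need the most care. First, Bazit's result is stated for \emph{groups}, whereas $F$ is defined on the abelian \emph{semigroup} $\sgS$; this has to be reconciled, which can be arranged since the difference functions $T \mapsto T r_{T_0}$ extend continuously to the compact group $\overline{\sgS}|_{E_{\mathrm{rev}}}$, placing us within the scope of the known semigroup versions. Second, one must verify cleanly the cocycle identity and the boundedness and regularity hypotheses required to invoke the theorem. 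By contrast, the stable part carries no geometric content and is handled outright by hypothesis (ii); the example on $\ce$ shows that it is exactly the reversible part which fails in the presence of $\co$, confirming that this is where the hypothesis must be spent.
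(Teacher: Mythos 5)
Your reduction is sound and, up to that point, parallels the paper's strategy: the forward implication is routine; the splitting $(\Id-T)x = P(\Id-T)x + (\Id-P)(\Id-T)x$ is legitimate (by (i) each $(\Id-T)x$ lies in $E_{\mathrm{aap}}$, so $P$ applies), hypothesis (ii) disposes of the stable parts, the cocycle identity $r_{ST}=r_S+Sr_T$ is correct since $P$ commutes with $\sgS$ on $E_{\mathrm{aap}}$, and the difference functions $T\mapsto Tr_{T_0}$ are indeed almost periodic because $r_{T_0}\in E_{\mathrm{rev}}$ has relatively compact orbit. The genuine gap is the final step: you discharge the entire content of the theorem onto a \enquote{semigroup version} of Bazit's theorem, and no such version is available in this generality. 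Bazit's result is proved for functions on \emph{groups}; the paper itself invokes it only for doubly power-bounded operators (via $\Z$) and explicitly remarks that the function-theoretic route does not cover general bounded abelian semigroups. Your proposed reconciliation does not repair this: it is true that the \emph{difference} functions extend to the compact group $G\defeq\overline{\sgS}|_{E_{\mathrm{rev}}}$ of Theorem \ref{thm:jdlg}, but $F$ itself neither factors through nor extends to $G$. Indeed $F(T)=P(x-Tx)$ depends on the action of $T$ on the vector $x$, which a priori does \emph{not} lie in $E_{\mathrm{aap}}$ (that is what we are trying to prove), so two operators with the same restriction to $E_{\mathrm{rev}}$ can produce different values $F(T)\neq F(T')$; and the strong closure of $\sgS$ in $\LLL(E)$ is not compact, again precisely because compactness of $\sgS x$ is the conclusion. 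So there is no compact group on which the hypothesis of Bazit's theorem can be checked for $F$, and for abstract semigroups (with non-invertible operators and no linear order, so no Ruess--Summers-type substitute either) the statement you need is exactly the heart of the theorem, not a citable black box.

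For comparison, the paper proves the needed compactness of $R=\{P(\Id-T)x\mid T\in\sgS\}$ directly, by contradiction via Bessaga--Pe{\l}czy\'nski (Theorem \ref{thm:bespelcz}): Lemma \ref{lem:firstlem} uses (i) and (ii) to extract an infinite $\sgT\subseteq\sgS$ with $\|P(Sx-Tx)\|\geq\delta>0$ for distinct $S,T\in\sgT$; Lemma \ref{lem:secondlem} then permits an inductive choice of pairs $S_m\neq T_m$ in $\sgT$ making the interaction terms $\bigl\|(S_{m+1}-T_{m+1})\bigl(\prod_k S_{i_k}-\prod_k T_{i_k}\bigr)x\bigr\|$ summably small; finally one sets $x_m\defeq T_m^{-1}P(T_mx-S_mx)$, where the inverse is taken in the compact group on $E_{\mathrm{rev}}$ --- this is where the group structure enters, in place of your appeal to Bazit --- and a telescoping computation shows the series $\sum x_m$ has unconditionally bounded partial sums while $\|x_m\|\geq\delta/M$ prevents convergence, contradicting the absence of $\co$. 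Your outline could in principle be completed by proving the Bazit-type statement for bounded cocycles with values in $E_{\mathrm{rev}}$ over an abstract abelian semigroup, but the natural proof of that statement is precisely this construction; as written, your proposal asserts rather than proves the decisive step.
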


	\begin{remark}
	We mention here  that the following, slightly weaker version of Theorem \ref{thm:mainthm} for cyclic semigroups appears in the unpublished manuscript \cite{Fa12b} of the first named author.
		Suppose that $T$ is a power-bounded operator on a Banach space $E$ not containing a copy of $\co$, $P$ is the Jacobs--de Leeuw--Glicksberg projection of the corresponding cyclic semigroup, and 
			\begin{iiv}
				\item $\{T^{n+1} - T^{n}x\mid n \in \N\}$ is relatively compact, and
				\item $((\Id-P)(\Id - T^n)x)_{n \in \N}$ converges.
			\end{iiv}
			Then $\{T^{n}x\mid n \in \N\}$ is relatively compact.
			
		An anonymous hint brought it to our knowledge that this can be deduced from the result of Ruess and Summers, mentioned earlier, as follows. Define the asymptotically almost periodic function
			\begin{align*}
				f \colon [0,\infty) \rightarrow E, \quad t \mapsto \sum_{k=0}^\infty \varphi_k(t) (T^{k+1} - T^k)x
			\end{align*}
		where
			\begin{align*}
				\varphi_k(t) = \begin{cases} 1 - 4|t-k| & \textrm{ if } |t-k| < \frac{1}{4},\\
									0 & \textrm{ else},
				\end{cases}
			\end{align*}
		for $t \geq 0 $ and $k \in \N_0$, see \cite[Remark 3.4]{RRS1991}.
		A moment's thought reveals that condition (ii) implies the Riemann integrability of the corresponding function $f_{\mathrm{s}}$. An application of the result of Ruess and Summers then yields relative compactness of $\{T^nx \mid n \in \N\}$.
		
		Though this approach reveals the connection to the Ruess--Summer version of the Bohl--Bohr--Kadets integration theorem, it cannot be used to cover the case of more general semigroups considered in this article.
	\end{remark}

	Our proof of  Theorem \ref{thm:mainthm} is inspired by Bazit's paper. The idea is to prove the theorem indirectly using the following characterization of Banach spaces (not) containing $\co$ (see \cite{BessPel} and also \cite[Thms.~6 and 8]{Di84}).
	\begin{theorem}[Bessaga--Pe{\l}czy\'nski]\label{thm:bespelcz} Let $E$ be a Banach space and for $n\in \N$ let $x_n\in E$ be vectors such that the partial sums are unconditionally bounded (i.e., $\sum_{j=1}^N x_{n_j}$ are uniformly  bounded for all subsequences $(n_j)$ of $\N$) and such that the series $\sum x_i$ is nonconvergent. Then $E$ contains a copy of $\co$.
\end{theorem}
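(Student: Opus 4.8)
This is the classical $\co$-theorem of Bessaga and Pe{\l}czy\'nski, and the plan is to manufacture inside $E$ a seminormalized block sequence of $\sum x_n$ that is \emph{equivalent to the unit vector basis of} $\co$. First I would record the quantitative content of the hypothesis: unconditional boundedness of the partial sums means there is a constant $C$ with $\|\sum_{n\in F}x_n\|\le C$ for every finite $F\subseteq\N$, which (choosing $F$ to match the signs of $\phi(x_n)$) is equivalent to the summability estimate $\sum_n|\phi(x_n)|\le C'\|\phi\|$ for all $\phi\in E'$. Since $\sum x_n$ is nonconvergent, its partial sums fail the Cauchy condition, so there are $\delta>0$ and consecutive intervals $I_1<I_2<\dots$ whose block sums $y_k\defeq\sum_{n\in I_k}x_n$ satisfy $\|y_k\|\ge\delta$; as signed subsums of the $x_n$, the $y_k$ again have unconditionally bounded partial sums. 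Replacing $E$ by the separable space $\lin\{y_k\mid k\in\N\}$ costs nothing (a copy of $\co$ in a subspace is a copy in $E$) and buys weak$^*$ sequential compactness of the unit ball of $E'$. The upper estimate $\|\sum_k a_k y_k\|\le C'\max_k|a_k|$ is then immediate from unconditional boundedness (for real scalars every vector of $[-1,1]^N$ is a convex combination of sign vectors, and one splits into real and imaginary parts in the complex case).

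The heart of the matter is the matching \emph{lower} estimate $\|\sum_k a_k y_k\|\ge c\max_k|a_k|$ for some $c>0$, which I would obtain by building functionals that are almost biorthogonal to a subsequence of $(y_k)$. Choose $\phi_k\in E'$ with $\|\phi_k\|\le1$ and $\phi_k(y_k)=\|y_k\|\ge\delta$; passing to a subsequence let $\phi_k\to\phi$ weak$^*$, and set $\psi_k\defeq\phi_k-\phi$. Unconditional boundedness forces $\sum_k|\phi(y_k)|<\infty$, hence $\phi(y_k)\to0$, so $\mathrm{Re}\,\psi_k(y_k)\ge\delta/2$ for large $k$ while $\|\psi_k\|\le2$, and moreover $\psi_k\to0$ weak$^*$, i.e.\ $\psi_k(y)\to0$ for every fixed $y$. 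I would then extract a subsequence $(y_{k_j})$ by a diagonal induction making the off-diagonal interactions summably small, $\sum_{i\ne j}|\psi_{k_j}(y_{k_i})|<\delta/8$: the forward interactions are tamed because $\sum_n|\psi_{k_j}(y_n)|<\infty$ has small tails, and the backward interactions are tamed because $\psi_{k_j}(y_{k_i})\to0$ as $j\to\infty$ for each fixed earlier index, which is exactly what the weak$^*$ convergence $\psi_k\to0$ provides. Testing $\sum_k a_k y_{k_j}$ against $\psi_{k_{j_0}}$ for an index $j_0$ realizing $|a_{j_0}|=\max_j|a_j|$ then produces the lower bound $\|\sum_k a_k y_{k_j}\|\ge c\max_j|a_j|$.

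Combining the two estimates shows that $e_k\mapsto y_{k_j}$ extends to an isomorphism of $\co$ onto $\overline{\lin}\{y_{k_j}\mid j\in\N\}\subseteq E$, so $E$ contains a copy of $\co$. I expect the lower estimate to be the main obstacle: the upper estimate and the block extraction are soft consequences of unconditional boundedness, whereas producing functionals that are small on the remaining blocks \emph{in both directions} is where the geometry is genuinely used. The clean resolution is that subtracting the weak$^*$ limit $\phi$ of the norming functionals simultaneously keeps them norming (via $\phi(y_k)\to0$, itself a consequence of unconditional boundedness) and makes them weak$^*$ null, which is precisely what controls the backward interactions; the reduction to a separable space, legitimizing weak$^*$ sequential compactness, is the enabling technical point.
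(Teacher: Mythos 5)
Your proof is correct, but note first that the paper itself does not prove this statement at all: it is quoted as the classical Bessaga--Pe{\l}czy\'nski theorem with references to the original paper and to Diestel's book, so there is no internal proof to match. Measured against the standard published argument, your route differs in one interesting way. The classical proof (as in Diestel) observes, as you do, that the hypothesis is exactly weak unconditional Cauchyness ($\sum_n|\phi(x_n)|\le C'\|\phi\|$) plus failure of the Cauchy condition, extracts the same seminormalized blocks $y_k$ over disjoint intervals, and gets the upper $\co$-estimate from the WUC bound; but for the lower estimate it invokes the Bessaga--Pe{\l}czy\'nski selection principle to pass to a \emph{basic} subsequence of the weakly null sequence $(y_k)$ (weak nullity follows from $\sum_k|\phi(y_k)|<\infty$), and then uses the uniformly bounded coordinate functionals of that basic sequence, $|a_j|\,\|y_j\|\le 2K\bigl\|\sum_k a_ky_k\bigr\|$, to conclude equivalence with the unit vector basis of $\co$. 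You instead manufacture the almost-biorthogonal functionals by hand: norming functionals $\phi_k$, subtraction of a weak$^*$ cluster point $\phi$ (legitimate after the separable reduction, which makes the dual ball weak$^*$ sequentially compact), the observation that $\phi(y_k)\to0$ keeps $\psi_k=\phi_k-\phi$ norming while making it weak$^*$ null, and a gliding-hump induction taming forward interactions by the summable tails of $\sum_n|\psi_{k_j}(y_n)|$ and backward interactions by weak$^*$ nullity. This is a genuine and correct replacement for the selection principle --- it is essentially the content of that principle's proof specialized to this situation --- and it buys self-containedness (only Hahn--Banach and weak$^*$ metrizability of the dual ball of a separable space are used), at the cost of being somewhat longer than the citation-assisted classical argument. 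Two small points to tidy in a write-up: the WUC estimate must be transferred to the subspace $\overline{\lin}\{y_k\mid k\in\N\}$ via Hahn--Banach extension of the functionals (or rederived there from the bounded subsums of the $y_k$ themselves), and the $y_k$ are plain subsums, not \enquote{signed} ones; neither affects correctness.
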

	In addition, we need the following two simple observations.
\begin{lemma}\label{lem:firstlem}
	Let $\sgS \subseteq \LLL(E)$ be a bounded, abelian operator semigroup on a Banach space $E$. Let $x \in E$ be  such that $\sgS x$ is not relatively compact, but 
	\begin{itemize}
		\item $\sgS (\Id - T)x$ is relatively compact for every $T \in\sgS$,
		\item $(\Id - P)(\Id - \sgS)x$ is relatively compact.
	\end{itemize}		
	Then there is an infinite subset $\sgT  \subseteq \sgS$ such that
		\begin{align*}
			\inf \{\|P(Sx-Tx)\|\mid S,T \in \sgT  \textrm{ with } S \neq T\} > 0.
		\end{align*}
\end{lemma}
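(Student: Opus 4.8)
The plan is to transfer the failure of relative compactness of the orbit to the projected differences and then to extract a separated sequence by hand.

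First I would record the identities that make the statement meaningful. The hypothesis that $\sgS(\Id-T)x$ is relatively compact for every $T\in\sgS$ says precisely that $(\Id-S)x\in E_{\mathrm{aap}}$ for every $S\in\sgS$, so the projection $P\in\LLL(E_{\mathrm{aap}})$ may legitimately be applied to each such difference. Setting $y_S\defeq P((\Id-S)x)\in E_{\mathrm{rev}}$, the decomposition $Sx-Tx=(\Id-T)x-(\Id-S)x$ together with the linearity of $P$ gives
\begin{align*}
	P(Sx-Tx)=y_T-y_S.
\end{align*}
Hence the assertion reduces to finding an infinite $\sgT\subseteq\sgS$ on which the family $\{y_S\mid S\in\sgT\}$ is uniformly separated.

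Next I would show that $\{y_S\mid S\in\sgS\}$ fails to be relatively compact. The set $D\defeq\{(\Id-S)x\mid S\in\sgS\}$ is the translate $x-\sgS x$ of the orbit and hence is not relatively compact, since $\sgS x$ is not and translation is a homeomorphism. Decomposing $D$ through $P$, its component $(\Id-P)D=(\Id-P)(\Id-\sgS)x$ is relatively compact by assumption. If $PD=\{y_S\mid S\in\sgS\}$ were relatively compact as well, then $D\subseteq PD+(\Id-P)D$ would be contained in the Minkowski sum of two relatively compact sets and thus relatively compact, a contradiction. Therefore $\{y_S\mid S\in\sgS\}$ is not relatively compact.

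Finally, as $E$ is complete, a set that is not relatively compact is not totally bounded, so there is an $\varepsilon>0$ for which no finite family of balls of radius $\varepsilon$ covers $\{y_S\mid S\in\sgS\}$. A greedy selection then yields $S_1,S_2,\dotsc\in\sgS$ with $\|y_{S_n}-y_{S_m}\|\geq\varepsilon$ whenever $n\neq m$: having chosen $S_1,\dotsc,S_n$, the balls centred at $y_{S_1},\dotsc,y_{S_n}$ do not cover the set, so some $y_{S_{n+1}}$ lies outside all of them. Putting $\sgT\defeq\{S_n\mid n\in\N\}$ and invoking the identity above gives $\inf\{\|P(Sx-Tx)\|\mid S,T\in\sgT,\ S\neq T\}\geq\varepsilon>0$. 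I expect no serious obstacle here: the only point requiring care is the bookkeeping around $P$---checking that every difference genuinely lands in $E_{\mathrm{aap}}$ and that the decomposition identity is used with the correct signs---while the extraction of a separated sequence from a set that is not totally bounded is routine.
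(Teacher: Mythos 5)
Your proof is correct, and it organizes the argument differently from the paper. The paper works in the opposite order: it first extracts a $2\delta$-separated sequence $(S_nx)_{n\in\N}$ directly from the non-totally-bounded orbit $\sgS x$, then uses the relative compactness of $(\Id-P)(\Id-\sgS)x$ to pass to a subsequence along which $\|(\Id-P)(S_nx-S_mx)\|<\delta$ for all $n,m$, so that the triangle inequality applied to the decomposition $S_nx-S_mx=P(S_nx-S_mx)+(\Id-P)(S_nx-S_mx)$ forces $\|P(S_nx-S_mx)\|>\delta$. You instead push the non-compactness through $P$ first: since $D=(\Id-\sgS)x$ is a translate of $\sgS x$ and $D\subseteq PD+(\Id-P)D$, relative compactness of $PD$ together with the assumed relative compactness of $(\Id-P)D$ would make $D$ relatively compact, a contradiction; hence $PD=\{P(\Id-S)x\mid S\in\sgS\}$ is not totally bounded and a greedy selection finishes. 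Both arguments hinge on the same decomposition through the Jacobs--de Leeuw--Glicksberg projection, but yours proves the slightly stronger structural fact that the projected difference set is itself not relatively compact, and it replaces the paper's two-step subsequence refinement by a single qualitative step (sum of two relatively compact sets is relatively compact); the paper's quantitative version keeps an explicit relation between the separation constant of the projected differences and the $2\delta$-separation in the original orbit, though only the positivity of $\delta$ is used later in the proof of Theorem \ref{thm:mainthm}, so your version suffices for the application. One bookkeeping point that you use implicitly and is worth stating: the pairwise $\varepsilon$-separation of the vectors $y_{S_n}$ forces the operators $S_n$ to be pairwise distinct, which is what makes $\sgT$ infinite.
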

\noindent Note here that $Sx-Tx\in E_{\mathrm{aap}}$ holds for each $S,T\in \sgS$.
\begin{proof}
	Since $\sgS x$ is not totally bounded, there is $\delta > 0$ and a sequence $(S_n)_{n \in \N}$ in $\sgS$ such that
		\begin{align*}
			\|S_nx - S_m x\| > 2\delta \textrm{ for } n \neq m.
		\end{align*}
	Passing to a subsequence we may assume that $((\Id - P)(\Id - S_n)x)_{n \in \N}$ converges. We may even suppose that
		\begin{align*}
			\|(\Id-P)(S_nx-S_m x) \| < \delta
		\end{align*}
	for all $n,m \in \N$. This yields the claim.
\end{proof}

\begin{lemma}\label{lem:secondlem}
	Let $\sgT  \subseteq \LLL(E)$ be an infinite set of operators on a Banach space $E$. Moreover, let $F \subseteq E$ be a finite subset such that $\sgT x$ is relatively compact for every $x \in F$. For every $\varepsilon > 0$ there is an infinite subset $\EuScript{R}\subseteq \sgT $ with
		\begin{align*}
			\sup_{S,T \in \EuScript{R}} \|Sx - Tx\| \leq \varepsilon \textrm{ for every } x \in F.
		\end{align*}
\end{lemma}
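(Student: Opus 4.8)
The plan is a straightforward finite iteration of the pigeonhole principle, exploiting that a relatively compact subset of a Banach space is totally bounded. Enumerate the finite set as $F = \{x_1, \dotsc, x_k\}$ and fix $\varepsilon > 0$. The goal is to thin out $\sgT$ one coordinate at a time, retaining an infinite set at each stage.

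First I would treat $x_1$: since $\sgT x_1$ is totally bounded, it can be covered by finitely many balls of radius $\varepsilon/2$. As $\sgT$ is infinite, the pigeonhole principle guarantees that at least one of these balls contains $Sx_1$ for infinitely many $S \in \sgT$; collecting these operators gives an infinite subset $\sgT_1 \subseteq \sgT$ with $\|Sx_1 - Tx_1\| \le \varepsilon$ for all $S, T \in \sgT_1$, the bound following from the diameter of a radius-$\varepsilon/2$ ball being at most $\varepsilon$.

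Next I would iterate: having constructed an infinite set $\sgT_{i-1} \subseteq \sgT$, I apply the same argument to $x_i$ and the set $\sgT_{i-1} x_i$, which is totally bounded as a subset of the relatively compact set $\sgT x_i$. Covering it by finitely many $\varepsilon/2$-balls and invoking the pigeonhole principle yields an infinite subset $\sgT_i \subseteq \sgT_{i-1}$ on which $\|Sx_i - Tx_i\| \le \varepsilon$. Since $\sgT_i \subseteq \sgT_{i-1} \subseteq \dotsb \subseteq \sgT_1$, every bound established in an earlier stage persists, so $\|Sx_j - Tx_j\| \le \varepsilon$ holds on $\sgT_i$ for every $j \le i$. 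After $k$ steps the set $\EuScript{R} \defeq \sgT_k$ is the desired infinite subset.

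There is essentially no serious obstacle here; the only point requiring attention is that finiteness of $F$ is precisely what permits the nested construction to terminate after finitely many thinnings while simultaneously preserving all previously achieved bounds, and that choosing balls of radius $\varepsilon/2$ produces the non-strict inequality $\sup_{S,T\in\EuScript{R}}\|Sx - Tx\| \le \varepsilon$ stated in the lemma.
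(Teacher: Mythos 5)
Your proof is correct and amounts to essentially the same compactness-thinning argument as the paper's: where you cover each $\sgT x_i$ by finitely many $\varepsilon/2$-balls and apply the pigeonhole principle iteratively over the finite set $F$, the paper extracts a sequence of pairwise distinct operators and passes to a subsequence along which $(T_n x)_{n\in\N}$ converges for every $x \in F$, then takes a Cauchy tail. The two routes are interchangeable (yours uses only total boundedness, the paper's uses sequential compactness), so no further comment is needed.
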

\begin{proof}
 	Take a sequence of $(T_n)_{n \in \N}$ of pairwise distinct operators in $\sgT $. Passing to a subsequence we may assume that $(T_nx)_{n \in \N}$ converges for every $x \in F$ and, consequently, for $\varepsilon > 0$ there is $N \in \N$ with
 		\begin{align*}
 			\|T_nx-T_mx\| \leq \varepsilon
 		\end{align*}
 	for all $n,m \in \N$ with $n,m\geq N$.
\end{proof}

\begin{proof}[Proof of Theorem \ref{thm:mainthm}]
	We write 
		\begin{align*}
			M \defeq \sup_{S \in \overline{\sgS}} \|S\|+1 = \sup_{S \in \sgS} \|S\|+1>0
		\end{align*}
	and assume that $\sgS x$ is not relatively compact. Choose $\sgT \subseteq \sgS$ as in Lemma \ref{lem:firstlem} and set
		\begin{align*}
			\delta \defeq \inf \{\|P(Sx-Tx)\|\mid S,T \in \sgT  \textrm{ with } S \neq T\} > 0.
		\end{align*}
	We inductively construct sequences $(S_n)_{n \in \N}$ and $(T_n)_{n \in \N}$ in $\sgT$. Take $m \in \N_0$ and suppose that $S_1,\dotsc ,S_m,T_1,\dotsc ,T_m$ have already been constructed. We apply Lemma \ref{lem:secondlem} to find $S_{m+1}, T_{m+1} \in \sgT $ such that $S_{m+1} \neq T_{m+1}$ and
		\begin{align*}
			\left\|(S_{m+1} - T_{m+1}) \left(\prod_{k=1}^l S_{i_k} - \prod_{k=1}^lT_{i_k}\right)  x\right\| \leq \frac{1}{2^{m+1}M}
		\end{align*}
	for all $i_1,\dotsc,i_l \in \{0,\dotsc,m\}$.
	
	We now set $x_m \defeq T_{m}^{-1}P(T_mx - S_mx)$ for $m \in \N$. Note that $x_m$ is well-defined since $\overline{\sgS}$ restricted to $E _{\mathrm{rev}} = \rg(P)$ is a compact group. Since
		\begin{align*}
			M \|x_m\| \geq \|P(T_mx - S_mx)\| \geq \delta
		\end{align*}
	for all $m \in \N$, we obtain that the series $\sum_{m=1}^\infty x_m$ cannot converge. Now take $i_1,\dotsc , i_m \in \N$ with $i_1 < i_2 < \dotsb < i_m$. Then, by a telescopic summation argument,
		\begin{align*}
			\sum_{j=1}^m x_{i_j} = &\sum_{j = 1}^m \prod_{k=1}^{j} T_{i_k}^{-1} P\left(\left((S_{i_j} - T_{i_j})\left(\prod_{k=1}^{j-1} S_{i_k} x - \prod_{k=1}^{j-1}T_{i_k}x\right) \right)\right)\\
			&- \prod_{k=1}^m T_{i_k}^{-1} P \left(\prod_{k=1}^m S_{i_k}x - \prod_{k=1}^m T_{i_k}x\right)
		\end{align*}
		and, consequently,
			\begin{align*}
				\left\|\sum_{j=1}^m x_{i_j}\right\| \leq \sum_{j=1}^m \frac{1}{2^{i_j}} + 2M \|x\|\leq 1+ 2M\|x\|.
			\end{align*}
		This contradicts Theorem \ref{thm:bespelcz}.
\end{proof}
Under stronger conditions on the semigroup the statement of Theorem \ref{thm:mainthm} can be simplified as follows.
\begin{corollary}\label{cor:cor1}
	Let $\sgS \subseteq \LLL(E)$ be a bounded, abelian semigroup on a Banach space $E$ not containing a copy of $\co$. Assume that $\sgS$ is \emph{bounded below}, i.e., $\inf_{S \in \sgS} \|Sx\| > 0$ for every $x \in E\setminus \{0\}$. For $x \in E$ the following assertions are equivalent:
		\begin{abc}
			\item The set $\sgS x$ is relatively compact.
			\item The set $\sgS(\Id - T)x$ is relatively compact for every $T \in \sgS$.
		\end{abc}
\end{corollary}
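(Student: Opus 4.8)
The plan is to deduce the corollary directly from Theorem \ref{thm:mainthm}, by showing that under the additional hypothesis that $\sgS$ is bounded below the second condition (ii) of that theorem is automatically fulfilled, so that it collapses to the condition (i) appearing as (b) here. The implication \enquote{(a) $\Rightarrow$ (b)} is immediate and needs no geometric input: using commutativity one has $\sgS(\Id-T)x = \{(\Id-T)Sx\mid S\in\sgS\} = (\Id-T)(\sgS x)$, which is the continuous image of the relatively compact set $\sgS x$ and hence relatively compact. Thus all the work lies in \enquote{(b) $\Rightarrow$ (a)}.

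So assume (b), that is, condition (i) of Theorem \ref{thm:mainthm}. First I would record that this already places every difference into the almost periodic part: since $\sgS(\Id-T)x$ is relatively compact, the vector $(\Id-T)x$ has relatively compact orbit and therefore $(\Id-T)x \in E_{\mathrm{aap}}$ for every $T\in\sgS$. In particular the projection $P$ may be applied to each $(\Id-T)x$, so that condition (ii) of Theorem \ref{thm:mainthm} is meaningful, and it remains to verify it.

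The decisive step is to show that the bounded-below hypothesis forces the almost weakly stable part of the Jacobs--de Leeuw--Glicksberg decomposition of $E_{\mathrm{aap}}$ to be trivial, i.e.\ $E_{\mathrm{aws}} = \{0\}$. Applying Theorem \ref{thm:jdlg} to $\sgS$ restricted to $E_{\mathrm{aap}}$ (which has relatively compact orbits by the very definition of $E_{\mathrm{aap}}$), the zero operator lies in the strong operator closure of $\sgS$ restricted to $E_{\mathrm{aws}} = \ker(P)$. Hence, for any fixed $y \in E_{\mathrm{aws}}$ there is a net $(S_\alpha)$ in $\sgS$ with $S_\alpha y \to 0$ in $E$, so that $\inf_{S\in\sgS}\|Sy\| = 0$. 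Since $\sgS$ is bounded below, this is possible only for $y=0$; therefore $E_{\mathrm{aws}} = \ker(P) = \{0\}$, and consequently $P = \Id$ on $E_{\mathrm{aap}}$. This identification is the one point I expect to require genuine care: I must make sure the approximation of the zero operator in the strong operator topology is read off correctly for each individual vector $y \in E_{\mathrm{aws}}$, as a net realizing $\inf_{S}\|Sy\|=0$; once this is pinned down, everything else is formal.

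With $P = \Id_{E_{\mathrm{aap}}}$ in hand, condition (ii) becomes trivial: for every $S\in\sgS$ the vector $(\Id-S)x$ belongs to $E_{\mathrm{aap}}$, whence $(\Id-P)(\Id-S)x = 0$ and so $(\Id-P)(\Id-\sgS)x = \{0\}$ is relatively compact. Both conditions (i) and (ii) of Theorem \ref{thm:mainthm} are thus satisfied, and that theorem yields the relative compactness of $\sgS x$, establishing (a) and completing the proof.
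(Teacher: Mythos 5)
Your proposal is correct and follows essentially the same route as the paper: the published proof likewise deduces the corollary from Theorem \ref{thm:mainthm} by noting that the bounded-below assumption forces $E_{\mathrm{aws}} = \ker(P) = \{0\}$, so that condition (ii) is automatically satisfied. Your net argument via Theorem \ref{thm:jdlg} (approximating the zero operator on $\ker(P)$ strongly to get $\inf_{S \in \sgS}\|Sy\| = 0$) merely spells out the detail that the paper leaves implicit in its one-line proof.
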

\begin{proof}
	The assumption implies that $E_{\mathrm{aws}} = \ker(P) =\{0\}$ and therefore condition (ii) of Theorem \ref{thm:mainthm} is automatically satisfied.
\end{proof}
We also obtain the obtain the following.
\begin{corollary}\label{cor2}
	Let $\sgS \subseteq \LLL(E)$ be an abelian semigroup of isometries on a Banach space $E$ not containing a copy of $\co$. For $x \in E$ the following assertions are equivalent:
		\begin{abc}
			\item The set $\sgS x$ is relatively compact.
			\item The set $\sgS(\Id - T)x$ is relatively compact for every $T \in \sgS$.
		\end{abc}
\end{corollary}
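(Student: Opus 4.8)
The plan is to deduce this statement directly from Corollary \ref{cor:cor1}, since a semigroup of isometries trivially satisfies all of its hypotheses. The only thing that needs checking is the \emph{bounded below} condition, and for isometries this is immediate rather than an obstacle.

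First I would observe that every $S \in \sgS$ is an isometry, so that $\|S\| = 1$ (assuming $E \neq \{0\}$, the other case being trivial) and hence $\sgS$ is a bounded operator semigroup with $M = 2$ in the notation of the proof of Theorem \ref{thm:mainthm}. By assumption $\sgS$ is abelian and $E$ contains no copy of $\co$, so the standing hypotheses of Corollary \ref{cor:cor1} are in place.

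Next I would verify the bounded-below property. For a fixed $x \in E \setminus \{0\}$ and any $S \in \sgS$ we have $\|Sx\| = \|x\|$ because $S$ is an isometry, whence
	\begin{align*}
		\inf_{S \in \sgS} \|Sx\| = \|x\| > 0.
	\end{align*}
Thus $\sgS$ is bounded below in the sense of Corollary \ref{cor:cor1}. Applying that corollary then yields the equivalence of (a) and (b), completing the proof.

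There is essentially no hard step here: the content is entirely carried by Theorem \ref{thm:mainthm} via Corollary \ref{cor:cor1}, and the role of the isometry hypothesis is solely to force $E_{\mathrm{aws}} = \ker(P) = \{0\}$, so that condition (ii) of Theorem \ref{thm:mainthm} becomes vacuous. If one preferred a self-contained argument avoiding the intermediate corollary, I would instead note directly that boundedness below forces the Jacobs--de Leeuw--Glicksberg projection $P$ to be the identity on $E_{\mathrm{aap}}$, so that $(\Id - P)(\Id - \sgS)x = 0$ is trivially relatively compact, and then invoke Theorem \ref{thm:mainthm}.
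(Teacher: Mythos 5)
Your proposal is correct and coincides with the paper's (implicit) argument: Corollary \ref{cor2} is stated as an immediate consequence of Corollary \ref{cor:cor1}, since isometries give $\|Sx\| = \|x\|$ for all $S \in \sgS$ and hence the bounded-below condition, which in turn forces $\ker(P) = \{0\}$ so that condition (ii) of Theorem \ref{thm:mainthm} holds vacuously. Your verification of these hypotheses, including the self-contained variant via $P = \Id$ on $E_{\mathrm{aap}}$, is exactly what the paper intends.
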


Combining Lemma \ref{lem:A1Am} with Theorem \ref{thm:mainthm} we conclude the following result.
\begin{proposition}
	Let $E$ be a Banach space not containing $\co$ and let $\sgS$ be the operator semigroup generated by the commuting, power-bounded operators $T_1,\dotsc,T_m\in\LLL(E)$. For $x \in E$ the following assertions are equivalent:
	\begin{abc}
		\item $\sgS x$ is relatively compact.
		\item The following conditions are satisfied:
		\begin{iiv}
			\item $\sgS(\Id -T_j)x$ is relatively compact for every $j \in \{1,\dotsc,m\}$.
			\item $(\Id-P)(\Id - \sgS)x$ is relatively compact.
		\end{iiv}
	\end{abc}
\end{proposition}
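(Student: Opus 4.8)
The plan is to obtain this Proposition as a direct combination of the two earlier results, with essentially no new work beyond checking that their hypotheses are met. First I would observe that the semigroup $\sgS$ generated by the commuting, power-bounded operators $T_1,\dotsc,T_m$ is itself bounded and abelian. Indeed, every element of $\sgS$ has the form $\prod_{j=1}^m T_j^{k_j}$, and since the generators commute and each satisfies $\|T_j^k\|\leq M_j$ for a common bound $M_j$, one gets $\bigl\|\prod_{j=1}^m T_j^{k_j}\bigr\|\leq \prod_{j=1}^m M_j$, so $\sgS$ is uniformly bounded; commutativity of the generators makes $\sgS$ abelian. Hence both Lemma \ref{lem:A1Am} and Theorem \ref{thm:mainthm} apply to $\sgS$, and the Jacobs--de Leeuw--Glicksberg projection $P$ on $E_{\mathrm{aap}}$ appearing in the statement is exactly the one furnished by Theorem \ref{thm:mainthm}.

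Next I would apply Theorem \ref{thm:mainthm} to $\sgS$ and $x$. This yields that $\sgS x$ is relatively compact if and only if both of the following hold: (i$'$) $\sgS(\Id-T)x$ is relatively compact for every $T\in\sgS$, and (ii) $(\Id-P)(\Id-\sgS)x$ is relatively compact. The only discrepancy between this characterization and the assertion of the Proposition is that condition (i$'$) quantifies over all $T\in\sgS$, whereas condition (i) of the Proposition quantifies only over the generators $T_1,\dotsc,T_m$.

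To close that gap I would invoke Lemma \ref{lem:A1Am}, which states precisely that for the finitely generated semigroup $\sgS$ the relative compactness of $\sgS(\Id-T_j)x$ for every $j\in\{1,\dotsc,m\}$ is equivalent to the relative compactness of $\sgS(\Id-T)x$ for every $T\in\sgS$. Substituting this equivalence for condition (i$'$) converts the characterization of Theorem \ref{thm:mainthm} into the stated equivalence between (a) and (b), in both directions simultaneously. I do not expect a genuine obstacle here: the mathematical content lies entirely in the two results being combined, and the only point worth spelling out carefully is the uniform boundedness of $\sgS$, which is what guarantees that the hypotheses of both Lemma \ref{lem:A1Am} and Theorem \ref{thm:mainthm} are satisfied.
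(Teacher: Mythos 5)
Your proposal is correct and coincides with the paper's own argument: the paper proves this Proposition precisely by combining Lemma \ref{lem:A1Am} with Theorem \ref{thm:mainthm}, using the former to pass between relative compactness of $\sgS(\Id-T_j)x$ for the generators and of $\sgS(\Id-T)x$ for all $T\in\sgS$. Your additional verification that $\sgS$ is bounded and abelian (so that both results apply) is a routine check the paper leaves implicit, and it is done correctly.
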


\section{Geometric properties of Banach spaces}

We now give characterizations of separable Banach spaces which do not contain a copy of $\co$.  Here we call a linear operator $T \in \LLL(E)$ on a Banach space $E$ \emph{power-bounded below}, if $\inf_{n \in \N} \|T^nx\| > 0$ for every $x \in E\setminus \{0\}$. Clearly, if $T \in \LLL(E)$ is an isometry or doubly power-bounded, then $T$ is power-bounded below.
\begin{theorem}\label{thm:char1}
	For a Banach space $E$ consider the following assertions:
		\begin{abc}
			\item $E$ does not contain a copy of $\co$.	
			\item Every power-bounded operator $T \in \LLL(E)$ which is power-bounded below and has relatively compact orbits on ${\rg(\Id-T)}$ has relatively compact orbits on $E$.
			\item Every power-bounded operator $T \in \LLL(E)$ which is power-bounded below  and has relatively compact orbits on ${\rg(\Id-T)}$ is mean ergodic on $E$.
			\item Every doubly power-bounded operator $T \in \LLL(E)$ with relatively compact orbits on ${\rg(\Id-T)}$ has relatively compact orbits on $E$.
			\item Every doubly power-bounded operator $T \in \LLL(E)$ with relatively compact orbits on ${\rg(\Id-T)}$ is mean ergodic on $E$.
		\end{abc}
	Then {\upshape (a)} $\Rightarrow$ {\upshape(b)} $\Leftrightarrow$ {\upshape(c)} $\Rightarrow$ {\upshape(d)} $\Leftrightarrow$ {\upshape(e)}. If $E$ is separable, then all the  assertions are equivalent.
\end{theorem}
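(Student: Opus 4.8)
The plan is to establish the routine chain $(a)\Rightarrow(b)\Leftrightarrow(c)\Rightarrow(d)\Leftrightarrow(e)$ for arbitrary $E$, and then, under the separability hypothesis, to close the cycle by proving $(d)\Rightarrow(a)$ via an explicit counterexample. For $(a)\Rightarrow(b)$, I fix a power-bounded, power-bounded below operator $T$ with relatively compact orbits on $\rg(\Id-T)$ and let $\sgS=\{T^n\mid n\in\N\}$ be the cyclic semigroup it generates. For any $x\in E$ the vector $(\Id-T)x$ lies in $\rg(\Id-T)$, so $\sgS(\Id-T)x$ is relatively compact; Lemma \ref{lem:A1Am} (with $m=1$) then makes $\sgS(\Id-S)x$ relatively compact for all $S\in\sgS$. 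Since $T$ is power-bounded below, $\sgS$ is bounded below, and as $E$ contains no copy of $\co$, Corollary \ref{cor:cor1} yields relative compactness of $\sgS x$. As $x$ is arbitrary, $T$ has relatively compact orbits on $E$.

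The equivalences $(b)\Leftrightarrow(c)$ and $(d)\Leftrightarrow(e)$ I would deduce from Theorem \ref{thm:charmeanergodic1} applied to the cyclic semigroup $\sgS=\{T^n\mid n\in\N\}$. The forward directions $(b)\Rightarrow(c)$ and $(d)\Rightarrow(e)$ are immediate, since an abelian semigroup with relatively compact orbits is mean ergodic (as recalled in the proof of Theorem \ref{thm:charmeanergodic1}). For the converse directions, I observe that $\rg(\Id-T^n)\subseteq\rg(\Id-T)$ for every $n$, because $\Id-T^n=(\Id-T)(\Id+\dots+T^{n-1})$; hence relatively compact orbits on $\rg(\Id-T)$ propagate to relatively compact orbits on $\rg(\Id-S)$ for every $S\in\sgS$, which is exactly condition (ii) of Theorem \ref{thm:charmeanergodic1}. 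Combined with mean ergodicity this produces relatively compact orbits on $E$, giving $(c)\Rightarrow(b)$ and $(e)\Rightarrow(d)$. Finally $(b)\Rightarrow(d)$ and $(c)\Rightarrow(e)$ hold because every doubly power-bounded operator is power-bounded below, so the operators in $(d)$, $(e)$ form a subclass of those in $(b)$, $(c)$.

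Now assume $E$ is separable; it remains to prove $(d)\Rightarrow(a)$, which I would argue by contraposition. Suppose $E$ contains an isomorphic copy $Y$ of $\ce$ (equivalently of $\co$), say $Y=\Phi(\ce)$ for an isomorphism $\Phi\colon\ce\to Y$. By Sobczyk's theorem the separability of $E$ forces $Y$ to be complemented, so $E=Y\oplus F$ with a bounded projection and a closed complement $F$. Let $T_0\in\LLL(\ce)$ be the invertible isometry of Example \ref{exa:1}, transport it to $Y$ via $T_Y\defeq\Phi T_0\Phi^{-1}$, and set $T\defeq T_Y\oplus\Id_F$. Then $T$ is doubly power-bounded: $T^n=T_Y^n\oplus\Id_F$ with $\|T_Y^n\|\le\|\Phi\|\,\|\Phi^{-1}\|$ and likewise for negative powers. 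Moreover $\Id-T=(\Id_Y-T_Y)\oplus 0$, so $\rg(\Id-T)=\rg(\Id_Y-T_Y)=\Phi(\rg(\Id-T_0))\subseteq Y$; since $T_0$ has relatively compact orbits on $\rg(\Id-T_0)\subseteq\co$ by Example \ref{exa:1}, applying $\Phi$ shows that $T$ has relatively compact orbits on $\rg(\Id-T)$.

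The remaining point, and the crux of the argument, is that this $T$ is \emph{not} almost periodic, so it witnesses the failure of $(d)$. Mean ergodicity is preserved under the isomorphism $\Phi$ and, for the direct sum $T=T_Y\oplus\Id_F$, reduces to mean ergodicity of $T_Y$ alone (the identity summand being trivially mean ergodic); since $T_0$, and hence $T_Y$, is not mean ergodic by Example \ref{exa:1}, $T$ is not mean ergodic either. Were $T$ to have relatively compact orbits on $E$ it would be mean ergodic, a contradiction. Thus $T$ is doubly power-bounded with relatively compact orbits on $\rg(\Id-T)$ but fails to have relatively compact orbits on $E$, so $(d)$ fails. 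The main obstacle is precisely this construction: producing a genuine counterexample on an \emph{arbitrary} separable space containing $\co$ rather than on $\ce$ itself, for which the complementation supplied by Sobczyk's theorem, together with the isomorphism-invariance of power-boundedness, mean ergodicity, and relative compactness, is the decisive tool.
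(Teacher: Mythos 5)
Your proposal is correct and takes essentially the same route as the paper: (a)$\Rightarrow$(b) via Lemma~\ref{lem:A1Am} and Corollary~\ref{cor:cor1}, the equivalences (b)$\Leftrightarrow$(c) and (d)$\Leftrightarrow$(e) via Theorem~\ref{thm:charmeanergodic1} (using $\rg(\Id-T^n)\subseteq\rg(\Id-T)$), and the closing implication under separability by transporting Example~\ref{exa:1} to a Sobczyk-complemented copy of $\co$ and taking the direct sum with the identity on a complement. The only cosmetic difference is that you close the cycle through (d) rather than the paper's (e), which is immaterial since your counterexample in fact refutes mean ergodicity and (d)$\Leftrightarrow$(e).
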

\begin{proof}
	Using Lemma \ref{lem:A1Am} and Corollary \ref{cor:cor1} we obtain that (a) implies (b). The other implications between assertions (b)--(e) are either trivial or follow directly from Lemma \ref{lem:A1Am} and Theorem \ref{thm:charmeanergodic1}. Now assume that $E$ is separable.
	By Sobczyk's theorem, see \cite{sobczyk1941}, or e.g., \cite[Sec.~2.5]{AlbiacKalton}, if $\co$ is a closed subspace in a separable Banach space, then it is complemented in there. Using Example \ref{exa:1} and the identity operator on a complement of $\co$ shows that (e) implies (a). 
\end{proof}
A different but related characterization is the following.
\begin{theorem}\label{thm:char2}
	For a Banach space $E$ consider the following assertions:
		\begin{abc}
			\item $E$ does not contain a copy of $\co$.
			\item For every power-bounded operator $T \in \LLL(E)$ which is power-bounded below the following assertions are equivalent: 
				\begin{itemize}
					\item $T$ has relatively compact orbits on ${\rg(\Id-T^m)}$  for some $m \in \N$.
					\item $T$ has relatively compact orbits on ${\rg(\Id-T^m)}$  for each $m \in \N$.
				\end{itemize}
			\item For every doubly power-bounded operator $T \in \LLL(E)$ the following assertions are equivalent: 
				\begin{itemize}
					\item $T$ has relatively compact orbits on ${\rg(\Id-T^m)}$  for some $m \in \N$.
					\item $T$ has relatively compact orbits on ${\rg(\Id-T^m)}$  for each $m \in \N$.
				\end{itemize}
		\end{abc}
	Then {\upshape (a)} $\Rightarrow$ {\upshape (b)} $\Rightarrow$ {\upshape (c)}. If $E$ is separable, then all three statements are equivalent.
\end{theorem}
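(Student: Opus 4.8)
The two forward implications are quick, so I would dispatch them first. For \enquote{(b) $\Rightarrow$ (c)} it suffices to recall that every doubly power-bounded operator is in particular power-bounded and power-bounded below (as already noted before Theorem \ref{thm:char1}); hence the class of operators quantified over in (c) is contained in that of (b), and the biconditional asserted in (b) specializes verbatim to (c). For \enquote{(a) $\Rightarrow$ (b)} the only nontrivial direction of the stated equivalence is \enquote{some $m$ $\Rightarrow$ each $m$}. So assume $E$ contains no copy of $\co$, let $T\in\LLL(E)$ be power-bounded and power-bounded below with relatively compact orbits on $\rg(\Id-T^{m_0})$ for some $m_0\in\N$, and put $S\defeq T^{m_0}$. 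Since $\rg(\Id-S)=\rg(\Id-T^{m_0})$ and each orbit $\{S^ny\mid n\in\N\}=\{T^{m_0n}y\mid n\in\N\}$ sits inside the corresponding $T$-orbit, $S$ has relatively compact orbits on $\rg(\Id-S)$; moreover $S$ is power-bounded and power-bounded below. As (a) forces assertion (b) of Theorem \ref{thm:char1} to hold, applying it to $S$ shows that $S$ has relatively compact orbits on all of $E$. Writing $n=m_0q+j$ with $0\le j<m_0$ gives $\{T^nx\mid n\in\N\}=\bigcup_{j=0}^{m_0-1}\{S^q(T^jx)\mid q\in\N\}$, a finite union of relatively compact sets, so $T$ is almost periodic on $E$ and in particular has relatively compact orbits on every $\rg(\Id-T^m)$.

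It then remains, in the separable case, to prove \enquote{(c) $\Rightarrow$ (a)}, which I would establish by contraposition: assuming $E$ separable and \emph{containing} a copy of $\co$, I will exhibit a doubly power-bounded operator violating the biconditional of (c). By Sobczyk's theorem the copy of $\co$ is complemented, so, exactly as in the proof of Theorem \ref{thm:char1}, I may write $E=C\oplus F$ with $C$ isomorphic to $\ce$, transport the operator constructed below from $\ce$ to $C$ through this isomorphism, and extend it by $\Id_F$. The resulting operator $\hat T=\tilde T\oplus\Id_F$ is doubly power-bounded, and since $\Id_E-\hat T^m=(\Id_C-\tilde T^m)\oplus 0$ has range inside $C$, its orbits on $\rg(\Id_E-\hat T^m)$ coincide with the $\tilde T$-orbits on $\rg(\Id_C-\tilde T^m)$. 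Hence it suffices to build, on $\ce$, a doubly power-bounded $T$ that has relatively compact orbits on $\rg(\Id-T^2)$ but \emph{not} on $\rg(\Id-T)$.

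The construction is the heart of the matter. I take the multiplication operator $T(x_n)_n\defeq(b_nx_n)_n$ on $\ce$ with $b_n\defeq-e^{\mathrm{i}\pi/(2n)}$, so that $b_n\to-1$, $|b_n|=1\neq b_n$ for all $n$, and $b_n^2=e^{\mathrm{i}\pi/n}\to1$. Then $T$ is an invertible isometry, hence doubly power-bounded. From $1-b_n^2\to0$ we get $\rg(\Id-T^2)\subseteq\co$, and, as in Example \ref{exa:1}, the restriction of $T$ to $\co$ has relatively compact orbits (every orbit of $y\in\co$ lies in the compact set $\{z\mid|z_n|\le|y_n|\text{ for all }n\}$); thus $T$ has relatively compact orbits on $\rg(\Id-T^2)$. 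On the other hand $\inf_n|1-b_n|\ge\sqrt2>0$, so $\Id-T$ is invertible and $\rg(\Id-T)=\ce$; relative compactness of orbits there would force $T$ to be almost periodic on all of $\ce$, and in particular the orbit of $\one$ would be relatively compact. But for $p\defeq k-l\neq0$ one has $\|T^k\one-T^l\one\|=\sup_n|b_n^{\,p}-1|=\sup_n|b_n^{\,|p|}-1|\ge2$: for $|p|$ odd this follows from $b_n^{\,|p|}\to-1$, while for $|p|=2q$ it follows because $b_n^{\,|p|}=e^{\mathrm{i}q\pi/n}$ equals $-1$ at $n=q$. Hence $\{T^k\one\mid k\in\N\}$ is an infinite $2$-separated set, not totally bounded, so $T$ is \emph{not} almost periodic on $\ce$. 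This is precisely the desired failure, completing the contrapositive and hence the equivalence in the separable case.

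The genuinely delicate step — and the one I expect to be the main obstacle — is the very last point. For a \emph{generic} unimodular multiplier with $b_n\to-1$ the orbit of $\one$ may in fact be relatively compact (for instance if $b_n$ is eventually constant), so the non-compactness of the $\rg(\Id-T)$-orbits must be engineered rather than assumed. The specific choice $b_n^2=e^{\mathrm{i}\pi/n}$ is exactly what makes every difference $T^k\one-T^l\one$ have norm at least $2$, thereby separating the behaviour at $m=1$ from that at $m=2$; all remaining ingredients (the reduction to $\ce$ via Sobczyk's theorem and the passage between $T$- and $T^2$-orbits) are routine and parallel the treatment of Theorem \ref{thm:char1}.
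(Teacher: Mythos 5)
Your proof is correct, and its skeleton is the same as the paper's: \enquote{(a) $\Rightarrow$ (b)} via Theorem \ref{thm:char1} applied to $S=T^{m_0}$ followed by the finite-union decomposition $\{T^nx\mid n\in\N\}\subseteq B\cup T(B)\cup\dotsb\cup T^{m_0-1}(B)$, \enquote{(b) $\Rightarrow$ (c)} trivially, and \enquote{(c) $\Rightarrow$ (a)} via Sobczyk's theorem, a counterexample multiplication operator on $\ce$, and the identity on a complement. The one genuinely different step is the verification of the counterexample. The paper's Example \ref{exa:A1Am} works with an \emph{arbitrary} unimodular sequence $a_n\to\xi$ ($\xi$ an $m$th root of unity, $a_n\neq\xi$) and argues structurally: $E_{\mathrm{aap}}$ is a closed subspace with $\co\subseteq E_{\mathrm{aap}}\subseteq\ce$, and since $\xi^{-1}T$ falls under Example \ref{exa:1} (hence is not mean ergodic, so $T$ is not almost periodic on $\ce$), the codimension-one squeeze forces $E_{\mathrm{aap}}=\co$; then $(\Id-T)\one=\one-(a_n)_{n\in\N}\in\rg(\Id-T)\setminus\co$ automatically has non-relatively-compact orbit. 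You instead fix the concrete instance $b_n=-e^{\mathrm{i}\pi/(2n)}$ (the case $m=2$, $\xi=-1$) and compute by hand: $\Id-T$ is invertible, so $\rg(\Id-T)=\ce$, and $\sup_n|b_n^p-1|\geq 2$ for every $p\neq 0$ (odd $|p|$ via the limit $b_n^{|p|}\to-1$, even $|p|=2q$ via the coordinate $n=q$), giving a $2$-separated orbit of $\one$. Both verifications are sound, and your estimates all check out (including $\inf_n|1-b_n|=\sqrt2$ and the transport through the Sobczyk complementation). However, your closing assessment that the non-compactness \enquote{must be engineered rather than assumed} misjudges the paper's example: under the standing constraint $a_n\neq\xi$ (which your \enquote{eventually constant} scenario violates), the soft argument above shows that \emph{every} admissible sequence yields an operator that is almost periodic on $\rg(\Id-T^m)\subseteq\co$ but on no vector outside $\co$, in particular not on $(\Id-T)\one$; no special arithmetic choice of the $b_n$ is needed. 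So your explicit computation buys a self-contained, quantitative argument avoiding the Jacobs--de Leeuw--Glicksberg machinery behind $E_{\mathrm{aap}}$, while the paper's route buys generality in $m$ and in the sequence at the price of invoking that structure theory.
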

The following example, which is exhibited on $\ce$ but can be transformed easily onto $\co$, is needed for the proof.
\begin{example}\label{exa:A1Am}
Let $E\defeq\ce$ and for $m\in\N$, $m\geq 2$ fixed let $(a_n)_{n \in \N}$ be a sequence in $\C$ converging to an $m$th root of unity $\xi$ such that $|a_n| = 1$ and $a_n \neq \xi$ for every $n \in \N$. We consider the induced multiplication operator $T \in \LLL(\ce)$ (cf.{} Example \ref{exa:1}) which is an invertible isometry. We then have $E_{\mathrm{aap}}=E_{\mathrm{rev}}\subseteq \co$. Since $\xi^{-1}T$ (and thus also $T$) does not have relatively compact orbits (see Example \ref{exa:1}) we even obtain $E_{\mathrm{aap}}=E_{\mathrm{rev}} = \co$. Since  $\rg(\Id-T^m)\subseteq \co$ and $\one - (a_n)_{n \in \N} \in \rg(\Id-T)\setminus \co$ (with $\one$ the constant $1$ sequence), we obtain that $T$ has relatively compact orbits on ${\rg(\Id-T^m)}$ but not on ${\rg(\Id-T)}$.  
\end{example}
\begin{proof}[Proof of Theorem \ref{thm:char2}]
	Assume that $E$ does not contain a copy of $\co$. Moreover, let $T \in \LLL(E)$ be a power-bounded operator which is power-bounded below and $m \in \N$ such that $T$ has relatively compact orbits on ${\rg(\Id-T^m)}$. Then $T^m$ has also relatively compact orbits on ${\rg(\Id-T^m)}$ and therefore $T^m$ has relatively compact orbits on $E$ by Theorem \ref{thm:char1}. But then $T$ also has relatively compact orbits on $E$. In fact, if $x \in E$, we write 
	\begin{align*}
		B \defeq \{T^{km}x\mid k \in \N_0\}
	\end{align*}
	 and obtain
		\begin{align*}
			\{T^nx\mid n\in \N\} \subseteq B \cup T(B) \cup \dotsb \cup T^{m-1}(B)
		\end{align*}
	Since the set on the right-hand side is relatively compact, this proves the implication \enquote{(a) $\Rightarrow$ (b)}. The implication  \enquote{(b) $\Rightarrow$ (c)} is trivial.
	
	Finally, if $E$ is separable, we once again use that $\co$ is complemented in $E$ (see  \cite{sobczyk1941}). Using Example \ref{exa:A1Am} on $\co$ and the identity operator on a complement of $\co$ in $E$, we obtain the implication \enquote{(c) $\Rightarrow$ (a)}.
\end{proof}

Finally, we come to our main result.
\begin{theorem}\label{thm:char3}
	For a Banach space $E$ with a Schauder basis the following assertions are equivalent:
		\begin{abc}
			\item $E$ is reflexive.
			\item Every power-bounded operator $T \in \LLL(E)$ which has relatively compact orbits on $\rg(\Id-T)$ is mean ergodic on $E$.
			\item Every power-bounded operator $T \in \LLL(E)$ which has relatively compact orbits on $\rg(\Id-T)$ has relatively compact orbits on $E$.
			\item Every power-bounded operator  $T \in \LLL(E)$  which has relatively weakly compact orbits on $\rg(\Id-T)$ has relatively weakly compact orbits on $E$.
			\item Every power-bounded operator  $T \in \LLL(E)$  which has relatively weakly  compact orbits on $\rg(\Id-T)$ is mean ergodic on $E$.
			\item Every power-bounded operator $T \in \LLL(E)$ with countable peripheral spectrum $\sigma(T) \cap \T$ has relatively compact orbits on $E$.
			\item Every power-bounded operator $T \in \LLL(E)$ with peripheral spectrum $\sigma(T) \cap \T \subseteq \{1\}$ is mean ergodic on $E$.	
		\end{abc}
\end{theorem}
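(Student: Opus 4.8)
The plan is to prove the two ``outer'' implications (a)$\Rightarrow$(b)--(g) and $\neg$(a)$\Rightarrow\neg$(b)$\wedge\dots\wedge\neg$(g), the second one witnessed by a \emph{single} operator, so that all seven assertions collapse onto (a). For the forward direction I would use only soft facts about reflexive spaces. On a reflexive $E$ every bounded, abelian operator semigroup is mean ergodic (Corollary~1.9 in \cite{Schr2013}), so every power-bounded $T$ is mean ergodic; this gives the conclusions of (b), (e) and (g) at once. Likewise bounded sets are relatively weakly compact, so every power-bounded $T$ has relatively weakly compact orbits on $E$, giving (d). For (c) I would feed mean ergodicity together with the hypothesis into Theorem~\ref{thm:charmeanergodic1} applied to the cyclic semigroup $\sgS=\{T^n\}$: its condition (i) holds (mean ergodicity) and condition (ii) holds because $\rg(\Id-T^k)\subseteq\rg(\Id-T)$, so relative compactness on $\rg(\Id-T)$ transfers to $E$. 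Finally (f) is exactly Remark~\ref{spectralcrit}(c).

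For the backward direction I would argue contrapositively, producing one operator that simultaneously destroys all of (b)--(g). Since $E$ is non-reflexive with a Schauder basis, the Fonf--Lin--Wojtaszczyk theorem (stated in the introduction, \cite{FLW01}) yields a power-bounded operator $T_0\in\LLL(E)$ that is \emph{not} mean ergodic. The key device is to pass to the averaged operator $T\defeq\tfrac12(\Id+T_0)$. This $T$ is power-bounded, with $\|T^n\|\le 2^{-n}\sum_{k}\binom nk\|T_0^k\|\le\sup_k\|T_0^k\|$; moreover $\fix(T)=\fix(T_0)$ and $\Id-T=\tfrac12(\Id-T_0)$ gives $\rg(\Id-T)=\rg(\Id-T_0)$, so the mean-ergodic decomposition $\fix\oplus\overline{\rg(\Id-\cdot)}$ is unchanged and $T$ is still not mean ergodic. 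At the same time the polynomial spectral mapping theorem gives $\spec(T)=\{\tfrac12(1+\lambda):\lambda\in\spec(T_0)\}\subseteq\{\lambda:|\lambda-\tfrac12|\le\tfrac12\}$, a disk meeting $\T$ only at $1$, so $\spec(T)\cap\T\subseteq\{1\}$.

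It remains to see that this $T$ refutes every condition. By the Katznelson--Tzafriri theorem \cite{KaTz86}, $\spec(T)\cap\T\subseteq\{1\}$ forces $\|T^n(\Id-T)\|\to0$, hence for each $y\in\rg(\Id-T)$ the orbit $\{T^ny\}$ converges to $0$ and is relatively compact. Thus $T$ meets the hypotheses of (b), (c), (d), (e) (relatively (weakly) compact orbits on $\rg(\Id-T)$) and of (f), (g) (peripheral spectrum $\subseteq\{1\}$, in particular countable), yet $T$ is not mean ergodic; and since relatively weakly compact orbits on $E$ would already force mean ergodicity (\cite{Schr2013}), $T$ has neither relatively compact nor relatively weakly compact orbits on $E$. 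Hence each of (b)--(g) fails, which together with the forward direction closes all equivalences.

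The main obstacle is essentially absorbed into the cited Fonf--Lin--Wojtaszczyk theorem, whose hard content (manufacturing a non-mean-ergodic power-bounded operator on a non-reflexive space with a basis) I would treat as an input. The genuinely delicate new point is the averaging step: one must be sure that replacing $T_0$ by $\tfrac12(\Id+T_0)$ does not accidentally repair mean ergodicity, which is why I would explicitly record that $\fix$ and $\overline{\rg(\Id-\cdot)}$ are preserved, while the averaging \emph{does} pull the peripheral spectrum into $\{1\}$. Combined with Katznelson--Tzafriri to secure relatively compact orbits on the range, this is exactly what lets a \emph{single} operator refute (b)--(g) at once. A secondary point to handle carefully is the reduction used for (c), namely that Theorem~\ref{thm:charmeanergodic1} applies to the cyclic semigroup via $\rg(\Id-T^k)\subseteq\rg(\Id-T)$.
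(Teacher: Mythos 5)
Your proposal is correct and takes essentially the same route as the paper: the forward implications rest on the same soft facts (mean ergodicity of bounded semigroups on reflexive spaces, Banach--Alaoglu, Remark~\ref{spectralcrit} and the transfer of compactness via Theorem~\ref{thm:charmeanergodic1}), and the hard direction is exactly the paper's construction, namely the non-mean-ergodic Fonf--Lin--Wojtaszczyk operator \cite{FLW01} averaged to $T=\tfrac12(\Id+T_0)$ so that $\sigma(T)\cap\T\subseteq\{1\}$. The only differences are organizational rather than mathematical: you let this single operator refute (b)--(g) simultaneously, using Katznelson--Tzafriri \cite{KaTz86} to verify the compactness hypotheses on $\rg(\Id-T)$, where the paper instead chains the conditions (e.g.\ (b)$\Rightarrow$(g) via Katznelson--Tzafriri and (d)$\Rightarrow$(e) via Yosida--Kakutani) and only proves \enquote{(g)$\Rightarrow$(a)}; and you replace the paper's renorming-to-a-contraction step by the polynomial spectral mapping theorem, which indeed suffices since power-boundedness already forces $\sigma(T_0)$ into the closed unit disk.
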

\begin{proof}
	Assume that $E$ is reflexive. We then obtain (c) by Theorem \ref{thm:char1}. Moreover, (d) holds since all power-bounded operators on a reflexive space have relatively weakly compact orbits by the Banach--Alaoglu theorem, and (f) follows from Remark \ref{spectralcrit} (iii). Therefore (a) implies (c), (d) and (f).\smallskip
	
	 The implications \enquote{(c) $\Rightarrow$ (b)}, \enquote{(e) $\Rightarrow$ (b)} and \enquote{(f) $\Rightarrow$ (g)} are trivial. That (d) implies (e) follows by the Yosida--Kakutani mean ergodic theorem, see \cite{Yo38, Ka38} or \cite[Thm. 8.22]{EFHN}, and the implication \enquote{(b) $\Rightarrow$ (g)} follows from the Katznelson--Tzafriri theorem, see \cite{KaTz86}.\smallskip
	 
	 To finish the proof, it thus suffices to show that the implication \enquote{(g) $\Rightarrow$ (a)} holds.  Assume that $E$ is not reflexive. By \cite[Theorem 1]{FLW01} we find a power-bounded operator $T \in \LLL(E)$ which is not mean ergodic. Passing to an equivalent norm on $E$ we may assume that $T$ is a contraction. We now consider $S \defeq \frac{1}{2}(I+T)$. For every $\lambda \in \C$ with $|\lambda| =1 \neq \lambda$ we have $|\lambda - \frac{1}{2}| > \frac{1}{2}$. This implies that $\lambda - S = ((\lambda - \frac{1}{2}) - \frac{1}{2}T)$ is invertible. Thus, $\sigma(S) \cap \T \subseteq \{1\}$ and consequently (g) cannot hold. 
\end{proof}

\section*{Acknowledgement}
\noindent We are indebted to Jochen Gl\"uck and Michael Lin for their insightful comments and suggestions. In particular, we are grateful to Michael Lin for suggesting statements (d), (e) and (g) as well as a new proof for Theorem \ref{thm:char2} allowing to remove the assumption that the space admits an \emph{unconditional} Schauder basis.

\bibliographystyle{amsabbrv}
\bibliography{refs-lin,bibliography}
\parindent0pt
\end{document}